\newtheorem{definition}{Definition}[section]
\newtheorem{theorem}[definition]{Theorem}
\newtheorem{lemma}[definition]{Lemma}
\newtheorem{corollary}[definition]{Corollary}
\newtheorem{remark}[definition]{Remark}
\newtheorem{example}[definition]{Example}
\newtheorem{proposition}[definition]{Proposition}
\newcommand{\A}{\mathcal{A}}
\newcommand{\hilb}{\mathcal{H}}
\newcommand{\M}{\mathcal{M}}
\newcommand{\unit}{{\bf 1}}
\begin{document}

\title{Non-linear monotone positive maps}

\author{Masaru Nagisa}
\address[Masaru Nagisa]{Graduate School of Science, Chiba University, 
Chiba, 263-8522,  Japan}
\email{nagisa@math.s.chiba-u.ac.jp}
\author{Yasuo Watatani}
\address[Yasuo Watatani]{Department of Mathematical Sciences,
Kyushu University, Motooka, Fukuoka, 819-0395, Japan}
\email{watatani@math.kyushu-u.ac.jp}

\maketitle

\begin{abstract}
We study several classes of general non-linear positive maps between $C^*$-algebras, 
which are not necessary completely positive maps.  We characterize the class of the  compositions of *-multiplicative maps and positive linear maps
as the class of non-linear maps of boundedly positive type abstractly. 
We consider three classes of non-linear positive maps defined only on the positive cones, 
which are the classes of being monotone, supercongruent or concave. Any concave maps 
are monotone.  The intersection of the monotone maps and the supercongruent maps  
characterizes the class of monotone Borel functional calculus. We give many examples 
of non-linear positive maps, which show that there exist no other relations among these three classes in general.  
 
\end{abstract}

\section{Introduction}
We study several classes of general non-linear positive maps between $C^*$-algebras. 
Ando-Choi \cite{A-C} and Arveson \cite{Ar2} investigated non-linear completely 
positive maps and extend the Stinespring dilation theorem.   Ando-Choi showed that 
any non-linear completely positive map is decomposed as a doubly infinite sum of compressions of completely positive linear maps on certain $C^*$-tensor products. 
Arveson obtained the similar expression for bounded completely positive 
complex-valued functions on the open unit ball of a unital $C^*$-algebra. 
Hiai-Nakamura \cite{H-N} studied a non-linear counterpart of Arveson's 
Hahn-Banach type extension theorem \cite {Ar1} for completely positive linear maps. 
Beltita-Neeb \cite{B-N} studied non-linear completely positive maps and dilation 
theorems for real involutive algebras.  Recently Dadkhah-Moslehian \cite{D-M} studied 
some properties of non-linear positive maps like Lieb maps and the multiplicative 
domain for 3-positive maps.

We study general non-linear positive maps between $C^*$-algebras, 
which are not necessary completely positive maps.  First we study a non-completely 
positive variation of Stinespring type dilation theorem.  Let $A$ and $B$ be $C^*$-algebras.  
We consider non-linear positive maps $\varphi : A \rightarrow B$.  For instance, *-multiplicative maps , positive linear maps and their compositions are 
typical examples of non-linear positive maps. We characterize the 
class of the  compositions of these algebraically simple 
maps as non-linear maps of boundedly positive type abstractly. This class is different 
with the class of non-linear completely positive maps, because the transpose map of the 
$n$ by $n$ matrix algebra for $n \geq 2$ is contained in the class.  
They are not necessarily real analytic.

Another typical example of non-linear posiive mas is given as the 
functional calculus by a continuous positive function.  See, for example, 
\cite{bhatia1} , \cite{bhatia2} and \cite{Si}. 
In particular 
operator monotone functions are important to study operator means 
in Kubo-Ando theory in \cite{kuboando}.  Osaka-Silvestrov-Tomiyama 
\cite{O-S-T} studied monotone operator functions on $C^*$-algebras. 
Recently Hansen-Moslehian-Najafi \cite{hansenmn}
characterize the continuous functional calculus by a operator convex 
function by being of Jensen-type.  Moreover a sufficient condition is given by 
Anjidani \cite{An}.

We consider three classes of non-linear positive maps defined only on the positive cones, 
which are the classes of being monotone, supercongruent or concave. 
Let $A$ be a $C^*$-algebra. We denote by $A^+$ be the cone of all positive elements. 
A non-linear positive map $\varphi : A^+ \rightarrow B^+$ between 
$C^*$-algebras $A$ and $B$  is 
said to be {\it monotone } if for any $x, y \in A^+$, $x \leq y$ implies that 
$\varphi(x) \leq \varphi(y)$. 
We say $\varphi : A^+ \rightarrow B^+$ is {\it supercongruent}  
if $c\varphi(a)c \le \varphi(cac)$ 
for any $a\in A^+$ and any contraction $c\in  A^+$. 
A positive map $\varphi : A^+ \rightarrow B^+$ is 
said to be {\it concave}
 if $\varphi (tx + (1-t)y) \geq t\varphi (x) + (1-t)\varphi (x)$ 
 for any $x, y \in A^+$ and $t \in [0,1]$.

Let  $f: [0,\infty) \rightarrow [0,\infty)$ 
be a operator monotone {\it continuous} function , $H$ a Hilbert space and 
$\varphi_f  : B(H)^+ \rightarrow B(H)^+$ be a 
continuous functional calculus by $f$ denoted by $\varphi_f(a) = f(a)$  for 
$a \in B(H)^+$. Then $\varphi_f$ is a monotone, supercongruent, concave and normal 
positive map.

Let $M$ be a von Neumann algebra on a Hilbert space $H$ and  
$\varphi : M^+ \rightarrow M^+$ be the non-linear positive map defined by the 
 $\varphi(a) ={\text {(the range projection of a)}}$ for $a \in M^+$. Then $\varphi$ 
 is monotone, supercongruent and normal .  In fact, this map is a functional calculus 
 of $a$ by a  Borel function $\chi_{(0,\infty)}$ on $[0,\infty )$.

In this paper we shall show that any concave maps are monotone.  
The intersection of the monotone maps and the supercongruent maps  
characterizes the class of monotone Borel functional calculus. We give many examples 
of non-linear positive maps, which show that there exist no other relations among these three classes.

We also discuss  the ambiguity of operator means for non-invertible 
positive operators related with our Theorem. 
Based on the theory of Grassmann manifolds, Bonnabel-Sepulchre  \cite{B-S} 
and Batzies-H\''{u}per-Machado-Leite \cite{B-H-M-L} introduced  the 
geometric mean for positive semidefinite matrices or projections of fixed rank. 
Fujii \cite{F} extends it to a general theory of means of positive semideinite 
matrices of fixed rank.

Noncommutative function theory is important and related to our paper.  But the domain 
of a noncommutative function is graded, which is different with our simple one domain setting. Therefore we do not disscuss a relation with them here. It will be discussed 
in the future.

Finally we show a matrix version of the Choquet integral \cite{Ch}, 
the Sugeno integral   \cite{Su} 
or more generally  the inclusion-exclusionintegral by Honda-Okazaki \cite{H-O} 
for non-addiive monotone measures 
as another type of examples of non-linear monotone positive maps. 

This work was supported by JSPS KAKENHI Grant Number JP17K18739.

\section{Non-linear maps of boundedly positive type}
  Let $A$ and $B$ be  $C^*$-algebras.  We consider non-linear positive maps 
  $\varphi : A \rightarrow B$.  For instance, *-multiplicative maps , positive linear maps and their compositions are typical examples of non-linear positive maps. 
In this section, we characterize the class of the  compositions of these algebraically simple 
maps as non-linear maps of boundedly positive type abstractly. This class is different 
with the class of non-linear completely positive maps, because the transpose map of the 
$n$ by $n$ matrix algebra for $n \geq 2$ is contained in the class.  
They are not necessarily real analytic. 

\begin{definition} \rm 

Let $A$ and $B$ be  $C^*$-algebras.  A map $\varphi : A \rightarrow B$ is said to be 
of positive type if for any finite subset $\{a_1,a_2,\dots , a_n\} \subset  A$ and 
any finite subset $\{\alpha_1,\alpha_2,\dots , \alpha_n\}\subset {\mathbb C} $
$$
0 \leq \sum_{i=1}^{n}  \sum_{j=1}^{n} 
\overline{\alpha_i}{\alpha_j} \varphi (a_i^*a_j) . 
$$
 A map $\varphi : A \rightarrow B$ is said to be 
of boundedly positive type if for any $a \in A$, there exists a constant $K=K_a > 0$ such that 
 for any finite subset $\{a_1,a_2,\dots , a_n\} \subset  A$ and 
any finite subset $\{\alpha_1,\alpha_2,\dots , \alpha_n\}\subset {\mathbb C} $
$$
0 \leq \sum_{i=1}^{n}  \sum_{j=1}^{n} 
\overline{\alpha_i}{\alpha_j} \varphi (a_i^*a^*aa_j) 
 \leq K  
  \sum_{i=1}^{n}  \sum_{j=1}^{n} 
\overline{\alpha_i}{\alpha_j} \varphi (a_i^*a_j) .
$$
Recall that a map $\varphi : A \rightarrow B$ is said to be positive if 
for any $a \in A$  $0 \leq \varphi (a^*a)$. 
Assume that $A$ is unital. Then it is clear that if $\varphi : A \rightarrow B$ is 
of boundedly positive type, then $\varphi$ is  of positive type. If  $\varphi$ is 
of positive type, then $\varphi$ is  positive. 
\end{definition}

\begin{example} \rm
Let $A$ and $B$ be  $C^*$-algebras. If a map $\varphi : A \rightarrow B$ is 
a positive linear map, then 
$\varphi$ is of boundedly positive type.  In fact,  
for any non-zero  $a \in A$, put $K  = \|a\|^2> 0$ . Then 
 for any finite subset $\{a_1,a_2,\dots , a_n\} \subset  A$ and 
any finite subset $\{\alpha_1,\alpha_2,\dots , \alpha_n\}\subset {\mathbb C} $
\begin{align*}
0 \leq \sum_{i=1}^{n}  \sum_{j=1}^{n} 
\overline{\alpha_i}{\alpha_j} \varphi (a_i^*a^*aa_j) 
 & = \varphi ((\sum_{i=1}^{n}{\alpha_i}a_i) ^* a^*a
 (\sum_{j=1}^{n}{\alpha_j}a_j))\\
 & \leq \|a\|^2  
  \sum_{i=1}^{n}  \sum_{j=1}^{n} 
\overline{\alpha_i}{\alpha_j} \varphi (a_i^*a_j) .
\end{align*}
We may put $K = 1$ if $ a = 0$.  
\end{example}

\begin{example} \rm
Let $A$ and $B$ be  $C^*$-algebras. If a map $\varphi : A \rightarrow B$ is *-multiplicative, 
that is, $\varphi(ab) = \varphi(a)\varphi(b)$ and$\varphi(a^*) = \varphi(a)^*$ for any 
$a,b \in A$, then $\varphi$ is of boundedly positive type.  In fact,  
for any  $a \in A$, put $K  = \|\varphi(a)\|^2 + 1> 0$ . Then 
 for any finite subset $\{a_1,a_2,\dots , a_n\} \subset  A$ and 
any finite subset $\{\alpha_1,\alpha_2,\dots , \alpha_n\}\subset {\mathbb C} $
\begin{align*}
0 \leq \sum_{i=1}^{n}  \sum_{j=1}^{n} 
\overline{\alpha_i}{\alpha_j} \varphi (a_i^*a^*aa_j) 
 & = (\sum_{i=1}^{n} {\alpha_i}\varphi(a_i)) ^*{\varphi(a)}^*{\varphi(a)}
 (\sum_{j=1}^{n} {\alpha_j}\varphi(a_j))    \\
 & \leq (\|\varphi(a)\|^2 + 1)
  \sum_{i=1}^{n}  \sum_{j=1}^{n} 
\overline{\alpha_i}{\alpha_j} \varphi (a_i^*a_j) .
\end{align*}
For example the determinant  $det: M_n({\mathbb C}) \rightarrow {\mathbb C}$ 
is of boundedly positive type.  Let $B = A \otimes_{min} \dots \otimes_{min} A$ and 
$\varphi : A \rightarrow B$ be defined by $\varphi (a) = a \otimes \dots \otimes a$, 
then $\varphi$ is of boundedly positive type. 
\end{example}

We shall study the class of maps of  boundedly positive type.  Let  
$A$, $B$ and $C$ be  unital $C^*$-algebras. If $\varphi_1 : A \rightarrow C$  is 
*-multiplicative and $\varphi_2 : C \rightarrow B$  is a positive linear map, 
then the composition $\varphi = \varphi_2 \circ \varphi_1$ is of boundedly positive type. 
Conversely any map of boundedly positive type is of this form. 

\begin{theorem}
Let $A$ and $B$ be  unital $C^*$-algebras. Consider a map $\varphi: A \rightarrow B$ . 
Then the following are equivalent: \\
\begin{enumerate}
\item[$(1)$]  $\varphi$ is of boundedly positive type. 
\item[$(2)$] There exists a unital $C^*$-algebra $C$, 
 a *-multiplicative map $\varphi_1 : A \rightarrow C$  
 and  a positive linear map $\varphi_2 : C \rightarrow B$ such that $\varphi$ is 
 the composition $\varphi = \varphi_2 \circ \varphi_1$ of these maps. 
\end{enumerate}
\end{theorem}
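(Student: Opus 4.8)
The plan for $(2)\Rightarrow(1)$ is a direct computation of exactly the kind carried out in the two Examples above. Writing $\varphi=\varphi_2\circ\varphi_1$ and, for a fixed $a\in A$, putting $d=\varphi_1(a)$ and $c=\sum_j\alpha_j\varphi_1(a_j)$ in $C$, the $*$-multiplicativity of $\varphi_1$ gives $\sum_{i,j}\overline{\alpha_i}\alpha_j\varphi(a_i^*a^*aa_j)=\varphi_2(c^*d^*dc)$ and $\sum_{i,j}\overline{\alpha_i}\alpha_j\varphi(a_i^*a_j)=\varphi_2(c^*c)$; since $0\le d^*d\le\|d\|^2\unit_C$ one has $0\le c^*d^*dc\le\|d\|^2c^*c$, and applying the positive, hence order preserving, linear map $\varphi_2$ yields the defining inequalities of boundedly positive type with $K_a=\|\varphi_1(a)\|^2+1$.

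For the substantial direction $(1)\Rightarrow(2)$, the first step is to linearize the multiplicative structure of $A$. I would let $C_0$ be the unital $*$-algebra that, as a vector space, has a basis $\{s_a:a\in A\}$ with product $s_as_b=s_{ab}$, involution $s_a^*=s_{a^*}$ and unit $s_{\unit_A}$ — the monoid $*$-algebra of $(A,\cdot,{}^{*})$ — and define $\varphi_1^{0}:A\to C_0$ by $\varphi_1^{0}(a)=s_a$ (visibly $*$-multiplicative) together with the linear map $\psi_0:C_0\to B$, $\psi_0(\sum_a\alpha_as_a)=\sum_a\alpha_a\varphi(a)$, so that $\psi_0\circ\varphi_1^{0}=\varphi$. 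For $x=\sum_i\alpha_is_{a_i}$ one computes $\psi_0(x^*x)=\sum_{i,j}\overline{\alpha_i}\alpha_j\varphi(a_i^*a_j)\ge0$, because $\varphi$ is of positive type (it is of boundedly positive type and $A$ is unital); thus $\psi_0$ is positive on the $*$-algebra $C_0$. Since every element of $C_0$ is a linear combination of the $s_{a_i}$, the boundedly positive type hypothesis is exactly the statement that $0\le\psi_0\big((s_ax)^*(s_ax)\big)\le K_a\,\psi_0(x^*x)$ for all $a\in A$ and all $x\in C_0$.

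The second step is to manufacture a $C^{*}$-completion of $C_0$ on which $\psi_0$ is continuous. Fixing a faithful representation $B\subseteq B(H)$, for $\eta\in H$ the functional $\psi_0^\eta=\langle\psi_0(\cdot)\eta,\eta\rangle$ is positive on $C_0$ and inherits $\psi_0^\eta\big((s_ax)^*(s_ax)\big)\le K_a\,\psi_0^\eta(x^*x)$. The GNS construction applied to $\psi_0^\eta$ produces a Hilbert space $H_\eta$, a cyclic vector $\Omega_\eta$ with $\|\Omega_\eta\|^2=\langle\varphi(\unit_A)\eta,\eta\rangle$, and a $*$-representation $\lambda_\eta$ of $C_0$ with $\psi_0^\eta(x)=\langle\lambda_\eta(x)\Omega_\eta,\Omega_\eta\rangle$; the inherited inequality forces $\|\lambda_\eta(s_a)\|\le\sqrt{K_a}$. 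Because the $s_a$ generate $C_0$ and each carries the a priori bound $\sqrt{K_a}$, the quantity $q(x):=\sup\{\|\pi(x)\|:\pi\text{ a $*$-representation of }C_0\text{ with }\|\pi(s_a)\|\le\sqrt{K_a}\ \forall a\}$ is a finite $C^{*}$-seminorm on $C_0$, and I would take $C$ to be the completion of $C_0/\ker q$; it is a unital $C^{*}$-algebra and $\varphi_1:A\to C$ sending $a$ to the image of $s_a$ is $*$-multiplicative.

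The last and genuinely delicate step is to show $\psi_0$ descends to $C_0/\ker q$ and extends to a positive linear $\varphi_2:C\to B$. For every $x\in C_0$ and $\eta\in H$, $|\langle\psi_0(x)\eta,\eta\rangle|=|\langle\lambda_\eta(x)\Omega_\eta,\Omega_\eta\rangle|\le\|\lambda_\eta(x)\|\,\|\Omega_\eta\|^2\le q(x)\,\|\varphi(\unit_A)\|\,\|\eta\|^2$, so the numerical radius of the operator $\psi_0(x)$ is at most $q(x)\,\|\varphi(\unit_A)\|$ and hence $\|\psi_0(x)\|\le2\,\|\varphi(\unit_A)\|\,q(x)$. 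In particular $\ker q\subseteq\ker\psi_0$, so $\psi_0$ factors through $C_0/\ker q$ and, being bounded there, extends to a bounded linear $\varphi_2:C\to B$; approximating $c\in C$ by elements of $C_0$ shows $\varphi_2(c^*c)=\lim\psi_0(x_n^*x_n)\ge0$, so $\varphi_2$ is positive, and $\varphi_2\circ\varphi_1(a)=\psi_0(s_a)=\varphi(a)$. (The degenerate case $\varphi\equiv0$ is disposed of by $C=\C$, $\varphi_1\equiv\unit_\C$, $\varphi_2=0$.) I expect this boundedness of $\psi_0$ to be the crux: for a map that is positive but not completely positive there is no operator Schwarz inequality at hand, and it is precisely the \emph{boundedly} positive type hypothesis, transported to the cyclic vectors $\Omega_\eta$, that controls $\|\psi_0(\cdot)\|$.
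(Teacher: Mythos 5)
Your proof is correct and takes essentially the same route as the paper's: the same semigroup $*$-algebra linearization of $A$, GNS representations whose generators are bounded by $\sqrt{K_a}$ via the boundedly positive type hypothesis, a universal $C^*$-seminorm and completion $C$, and the numerical-radius estimate $\|\psi_0(x)\|\le 2\|\varphi(\unit)\|\,q(x)$ to factor $\psi_0$ through the kernel and extend it to a positive linear map $\varphi_2$ on $C$. The only cosmetic differences are that you induce positive functionals from vector states of a faithful representation of $B$ rather than from all states of $B$, and you define the seminorm as a supremum over all suitably bounded representations instead of over the paper's specific GNS family.
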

\begin{proof}
(2) $\Rightarrow$ (1): Assume (2).  
For any  $a \in A$, put $K  = \|\varphi_1(a)\|^2 + 1> 0$ . Then 
 for any finite subset $\{a_1,a_2,\dots , a_n\} \subset  A$ and 
any finite subset $\{\alpha_1,\alpha_2,\dots , \alpha_n\}\subset {\mathbb C} $
\begin{align*}
0 \leq \sum_{i=1}^{n}  \sum_{j=1}^{n} 
\overline{\alpha_i}{\alpha_j} \varphi (a_i^*a^*aa_j) 
 & =\varphi_2( (\sum_{i=1}^{n} 
 {\alpha_i}\varphi_1(a_i)) ^*{\varphi_1(a)}^*{\varphi_1(a)}
 (\sum_{j=1}^{n} {\alpha_j}\varphi_1(a_j)) )   \\
 & =\leq (\|\varphi_1(a)\|^2 + 1)\varphi_2( (\sum_{i=1}^{n} 
 {\alpha_i}\varphi_1(a_i)) ^*
 (\sum_{j=1}^{n} {\alpha_j}\varphi_1(a_j)) )   \\
& \leq (\|\varphi_1(a)\|^2 + 1)
  \sum_{i=1}^{n}  \sum_{j=1}^{n} 
\overline{\alpha_i}{\alpha_j} \varphi (a_i^*a_j) .
\end{align*}
(1) $\Rightarrow$ (2): Assume (1).  Let ${\mathcal S}_A$ be a *-semigroup defined 
by  ${\mathcal S}_A = A$ as a set with the product $ab$ and the involution $a^*$ 
borrowed from $A$. Consider an algebraic  
*-semigroup algebra ${\mathbb C}[{\mathcal S}_A] $
of ${\mathcal S}_A$  with linear basis $\{u_a \ | \ a \in A \}$ :
$$
{\mathbb C}[{\mathcal S}_A] 
= \{ x = \sum_i  x_iu_{a_i } \  |  \ x_i \in {\mathbb C}, a_i \in A \}
$$
with the product $u_a u_b = u_{ab}$ and the involution $(u_a)^* = u_{a^*}$ 
for $a,b \in A$ . Thus  ${\mathbb C}[{\mathcal S}_A] $ is a algebraic *-algebra. 
Define a linear map $\psi : {\mathbb C}[{\mathcal S}_A]  \rightarrow B$  by 
$$
\psi(\sum_i  x_iu_{a_i } ) = \sum_i  x_i\varphi({a_i }). 
$$
For  any state $\omega$ on $B$, define a linear functional $\psi_{\omega}$ on   
${\mathbb C}[{\mathcal S}_A] $ by $\psi_ {\omega} = \omega \circ \psi $. 
We introduce a pre-inner product $<,>_{\omega}$ on ${\mathbb C}[{\mathcal S}_A] $ 
by 
$$
<x,y>_{\omega} := \psi_ {\omega}(y^*x) 
= \omega(\sum_i \sum_j 
\overline{y_j}{x_i} \varphi (b_j^*a_i))
$$
for $x = \sum_i  x_iu_{a_i },\  y= \sum_i  y_ju_{b_j} \in 
{\mathbb C}[{\mathcal S}_A] $,  $(x_i, y_j \in {\mathbb C}, a_i , b_j \in A ).$
Then 
$$<x,x>_{\omega}  = \psi_ {\omega}(x^*x) 
= \omega(\sum_i  \sum_j 
\overline{x_j}{x_i} \varphi (a_j^*a_i)) \geq 0, 
$$
since $\varphi$ is of positive type .  Let 
$N_{\omega} := \{ x   \in {\mathbb C}[{\mathcal S}_A]  \ | \ <x,x>_{\omega} =0 \}$. 
Define a Hilbert space $H_{\omega}$ by the completion of  
${\mathbb C}[{\mathcal S}_A]/ N_{\omega}$. 
 Let $\eta_{\omega} : {\mathbb C}[{\mathcal S}_A] \rightarrow  H_{\omega}$ 
 be the canonical map such that 
 $<\eta_{\omega}(x), \eta_{\omega}(y)> = <x,y>_{\omega}$.  
 Then we have a *-representation 
 $\pi_{\omega} : {\mathbb C}[{\mathcal S}_A] \rightarrow  B(H_{\omega})$ such that
 $\pi_{\omega}(u_a)\eta_{\omega}(x) = \eta_{\omega}(u_ax)$ and 
 $\pi_{\omega}(u_a)^* = \pi_{\omega}(u_a^*) = \pi_{\omega}(u_a^*)$ 
 for $a \in A$ .  In fact
 
\begin{align*}
 \|\eta_{\omega}(u_ax)\|^2 
&= <\sum_i  x_iu_{aa_i }, \sum_j  x_ju_{aa_j}>_{\omega}\\
&=\omega(\sum_i  \sum_j 
\overline{x_j}{x_i} \varphi (a_j^*a^*aa_i))\\
&\leq K\omega (\sum_i  \sum_j 
\overline{x_j}{x_i} \varphi (a_j^*a_i))  = K  \|\eta_{\omega}(x)\|^2
\end{align*}
since $\varphi$ is of boundedly positive type, where $K$ depends only on $a$. 
Therefore $\pi_{\omega}(u_a)$ is a well defined bounded operator with 
$\|\pi_{\omega}(u_a)\| \leq \sqrt{K}$.  Since $A$ has a unit $I$, we have that 
$$
<\pi_{\omega}(u_a)\eta_{\omega}(u_I), \eta_{\omega}(u_I)> 
=  <\eta_{\omega}(u_au_I), \eta_{\omega}(u_I)> 
={\omega}(\psi(u_a)) = {\omega}(\varphi(a))
$$
Moreover for $x = \sum_i  x_iu_{a_i } \in {\mathbb C}[{\mathcal S}_A] $ , we have that
$$
{\omega}(\psi(x)) = <\pi_{\omega}(x)\eta_{\omega}(u_I), \eta_{\omega}(u_I)> 
$$
Next we shall consider a $\varphi$-universal representation $\pi_u$  of  a *-algebra 
${\mathbb C}[{\mathcal S}_A]$  on a Hilbert space $H_u$ as follows: 
Put 
$$
H_u = \oplus \{H_{\omega} \ | \ \omega \text{ is a state on } B \}
$$
and 
$$
\pi_u = \oplus \{\pi_{\omega} \ | \ \omega \text{ is a state on } B \}.
$$
For $x   \in {\mathbb C}[{\mathcal S}_A]$ define the $\varphi$-universal seminorm 
$$
\|x\|_u := \sup \{ \|\pi_{\omega}(x)\| \  | \ \omega \text{ is a state on } B \}
\leq \sum_i |x_i|\sqrt{K_{a_i}} < \infty.
$$
Let $C$ be the completion of ${\mathbb C}[{\mathcal S}_A]/ {\rm Ker }  \pi_u$ by 
the induced norm $\|[x]\|_u$ . Then $C$ is a $C^*$-algebra and isomorphic to 
the closure of $\pi_u({\mathbb C}[{\mathcal S}_A])$.  We also have a 
*-representaion $\overline{\pi_u}$ of $C$ on $H_u$  such that 
$\overline{\pi_u}([x]) =  \pi_u(x)$. \\
Next we shall show that for $x \in {\mathbb C}[{\mathcal S}_A]$
$$
\| \psi (x) \| \leq 2\|\varphi(I)\| \|x\|_u.
$$
In fact, since 
\begin{align*}
|{\omega}(\psi(x)) 
&|= |<\pi_{\omega}(x)\eta_{\omega}(u_I), \eta_{\omega}(u_I)> |\\
&\leq \| \ \pi_{\omega}(x) \| \|\eta_{\omega}(u_I)\|^2
= \| \ \pi_{\omega}(x) \| {\omega}(\varphi (I)) 
\leq \| \varphi (I) \|  \| \ \pi_u(x) \|.
\end{align*}
we have that 
$$
\| \psi (x) \| \leq 2(\text{ numerical radius of }(\psi (x)) 
\leq 2\|\varphi(I)\| \|\pi_u(x)\|
\leq 2\|\varphi(I)\| \| x \|_u
$$
Therefore ${\rm Ker }  \pi_u \subset {\rm Ker }  \psi$. Hence there exists a linear map 
$\tilde{\psi} : {\mathbb C}[{\mathcal S}_A]/ {\rm Ker }  \pi_u \rightarrow B$ 
such that $\tilde{\psi}([x]) = \psi (x)$. Moreover $\tilde{\psi} $ extends to 
a linear map $\varphi_2 : C \rightarrow B$ by the boundedness of $\tilde{\psi}$. 
Then $\varphi_2$  is a positive linear map.  In fact, for 
$x \in {\mathbb C}[{\mathcal S}_A]$,  since $\varphi$ is of positive type, 
$$
\varphi_2([x^*x]) = \psi(x^*x)= \psi((\sum_i  \sum_j 
\overline{x_j}{x_i} \varphi (a_j^*a_i)) \geq 0
$$
Any positive element in $C$  can be  approximated with these $[x^*x]$  for 
 $x \in {\mathbb C}[{\mathcal S}_A]/{\rm Ker }  \pi_u$. By the continuity of $\varphi_2$ , 
 $\varphi_2$  is also positive. \\
 We define $\varphi_1: A \rightarrow C$ by $\varphi_1(a) = [u_a]$. 
 Then $\varphi_1$ is *-multiplicative.  Moreover  
 $$
 \varphi_2 \circ  \varphi_1(a) = \varphi_2([u_a]) = \psi (u_a) = \varphi(a). 
 $$
 for $a \in A.$

\end{proof}
\begin{definition} \rm
Let $A$ and $B$ be  $C^*$-algebras.  For a map $\varphi : A \rightarrow B$ and 
a natural number $n$,  $\varphi_n: M_n(A) \rightarrow M_n(B)$ is defined by 
$\varphi_n((a_{ij})_{ij})= (\varphi(a_{ij}))_{ij}$. Then $\varphi$ is 
completely positive if $\varphi_n$ is positive for any $n$.  $\varphi$ is said to be 
positive definite if, for any $n$ and for any $\{a_1,a_2,\dots , a_n\} \subset  A$, 
$\varphi(a_i^*a_j))_{ij}\in  M_n(B)$ is positive as in  \cite[Definition 2.8] {B-N}   . 
\end{definition}
\begin{remark}  \rm 
(1)Let $A$ and $B$ be  $C^*$-algebras. If a map $\varphi: A \rightarrow B$  is a non-linear map. If $\varphi$ is completely positive,  then $\varphi$ is positive definite. 
If $\varphi$ is positive definite,then $\varphi$ is of positive type.  But the converses 
do not hold.  In fact, the transpose map of the 
$n$ by $n$ matrix algebra for $n \geq 2$ is of positive type but is not positive definite. \\
(2)We should note that the class of completely positive maps and the class of 
maps of  boundedly positive type are different.  For example, let 
$A = B = {\mathbb C}$ and 
$\varphi(z) = e^z$. Then $\varphi$ is completely positive but is not of  boundedly positive type. In fact, there exist no constant $K > 0$ such that for any $z \in {\mathbb C}$, 
$\varphi(z^*3^2z) \leq K \varphi(z^*z)$. The transpose map of the 
$n \times n$ matrix algebra for $n \geq 2$ is of boundedly positive type but is
 not completely positive. 
\end{remark}

\section{Some classes of non-linear positive maps defined only on the positive cones}

Let $A$ be a $C^*$-algebra. We denote by $A^+$ be the cone of all positive elements. In this section we consider non-linear posiive maps defined only on the positive cones. 

\begin{definition} \rm 
Let $A$ and $B$ be  $C^*$-algebras. 
A non-linear positive map $\varphi : A^+ \rightarrow B^+$ is 
said to be {\it monotone } if for any $x, y \in A^+$, $x \leq y$ implies that 
$\varphi(x) \leq \varphi(y)$. 
We say $\varphi : A^+ \rightarrow B^+$ is {\it supercongruent}  
if $c\varphi(a)c \le \varphi(cac)$ 
for any $a\in A^+$ and any contraction $c\in  A^+$. 
A positive map $\varphi : A^+ \rightarrow B^+$ is 
said to be {\it concave}
 if $\varphi (tx + (1-t)y) \geq t\varphi (x) + (1-t)\varphi (x)$ 
 for any $x, y \in A^+$ and $t \in [0,1]$.

 When $A$ and $B$ are von Neumann algebras, $\varphi : A^+ \rightarrow B^+$ is 
 said to be {\it normal} if , 
for any bounded increasing net $a_{\nu} \in A^+$,
\[   \varphi(\sup_{\nu} a_{\nu}) = \sup_{\nu} \varphi(a_{\nu}) . \]
\end{definition}

\begin{example} \rm  
Let $f: [0,\infty) \rightarrow [0,\infty)$ 
be a operator monotone continuous function , $H$ a Hilbert space and 
$\varphi_f  : B(H)^+ \rightarrow B(H)^+$ be a 
continuous functional calculus by $f$ denoted by $\varphi_f(a) = f(a)$  for 
$a \in B(H)^+$. Then $\varphi_f$ is a monotone, supercongruent, concave and normal 
positive map. 
\label{monotonesupercongruent}
\end{example}



There exists a non-linear positive map $\varphi = : B(H)^+ \rightarrow B(H)^+$
which is monotone, supercongruent and normal  but is not a continuous functional calculus.  For example, let $\varphi(a)$ be the projection onto the closure of the range 
of $a \in B(H)^+$, then $\varphi(a)$ is called the range projection of $a$ or  
the supprot projection of $a$ and is equal to 
the projection onto the orthogonal complement of
the kernel of $a$ (\cite[2.22]{stratila}). 

\begin{proposition}
Let $M$ be a von Neumann algebra on a Hilbert space $H$ and  
$\varphi : M^+ \rightarrow M^+$ be the non-linear positive map defined by the 
 $\varphi(a) ={\text {(the range projection of a)}}$ for $a \in M^+$. Then $\varphi$ is 
 is monotone, supercongruent and normal .  
 \label{rangeprojection}
\end{proposition}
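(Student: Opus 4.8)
The plan is to verify the three properties---monotonicity, supercongruence, and normality---by reducing each to an operator-theoretic fact about range projections. Throughout, let me write $r(a)$ for the range projection of $a \in M^+$; recall $r(a)$ is the projection onto $\overline{\mathrm{ran}\, a} = (\ker a)^\perp$, it lies in $M$, and it is the Borel functional calculus $\chi_{(0,\infty)}(a)$, which equals $\sup_n a^{1/n}$ (increasing limit in the strong operator topology, hence the supremum in $M^+$). I will use these characterizations freely.

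\textbf{Monotonicity.} Suppose $x \leq y$ in $M^+$. The key classical fact is that $x \leq y$ implies $\ker y \subseteq \ker x$: indeed if $y\xi = 0$ then $\langle x\xi,\xi\rangle \leq \langle y\xi,\xi\rangle = 0$, so $x^{1/2}\xi = 0$ and hence $x\xi = 0$. Taking orthogonal complements gives $(\ker x)^\perp \subseteq (\ker y)^\perp$, i.e.\ $r(x) \leq r(y)$. So $\varphi(x) \leq \varphi(y)$.

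\textbf{Supercongruence.} Let $a \in M^+$ and let $c \in M^+$ be a contraction; I must show $c\, r(a)\, c \leq r(cac)$. First, $c\,r(a)\,c$ is positive with norm $\leq \|c\|^2 \leq 1$, so it suffices to show that its range projection is dominated by $r(cac)$; since a positive contraction $t$ satisfies $t \leq r(t)$, and $r(c\,r(a)\,c) \leq r(cac)$ would then give $c\,r(a)\,c \leq r(c\,r(a)\,c) \leq r(cac)$. So the real task is $\ker(cac) \subseteq \ker(c\,r(a)\,c)$. Suppose $cac\,\xi = 0$. Then $\langle ac\xi, c\xi\rangle = \langle cac\,\xi,\xi\rangle = 0$, so $a^{1/2}c\xi = 0$, hence $ac\xi = 0$ and also $r(a)c\xi = 0$ (as $r(a)$ is the projection onto $\overline{\mathrm{ran}\,a^{1/2}} = (\ker a^{1/2})^\perp$). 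Therefore $c\,r(a)\,c\,\xi = c\,r(a)(c\xi) = 0$. This gives the inclusion of kernels, hence $r(c\,r(a)\,c) \leq r(cac)$, completing the argument. I expect this step---handling the non-invertible, hence non-unitary, congruence---to be the main obstacle, but the kernel computation above circumvents it cleanly.

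\textbf{Normality.} Let $a_\nu \in M^+$ be a bounded increasing net with supremum $a$. Since $\varphi$ is monotone, $r(a_\nu)$ is increasing and bounded above by $r(a)$, so $p := \sup_\nu r(a_\nu) \leq r(a)$ exists in $M^+$; I must show $p = r(a)$, i.e.\ $p \geq r(a)$. The projection $p$ satisfies $r(a_\nu) \leq p$ for all $\nu$, so $a_\nu = r(a_\nu)\,a_\nu\,r(a_\nu) \leq \|a_\nu\|\, r(a_\nu) \leq \|a\|\,p$ (more to the point, $p\,a_\nu\,p = a_\nu$, equivalently $(\unit - p)a_\nu(\unit - p) = 0$, so $a_\nu^{1/2}(\unit - p) = 0$ and thus $a_\nu(\unit - p) = 0$). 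Passing to the supremum---using that $b \mapsto (\unit-p)b(\unit-p)$ is normal, or directly that $a_\nu \to a$ strongly on the bounded net---gives $a(\unit - p) = 0$, so $\mathrm{ran}(\unit - p) \subseteq \ker a$, i.e.\ $\unit - p \leq \unit - r(a)$, hence $r(a) \leq p$. Therefore $\varphi(\sup_\nu a_\nu) = r(a) = p = \sup_\nu \varphi(a_\nu)$, and $\varphi$ is normal. (That $\varphi$ maps $M^+$ into $M^+$ and is a positive map in the relevant sense is immediate since $r(a)$ is a projection in $M$.)
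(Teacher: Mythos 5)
Your proposal is correct and follows essentially the same route as the paper: monotonicity and supercongruence are reduced to the kernel inclusions $\ker(cac)\subseteq\ker(c\,r(a)\,c)$ together with the fact that a positive contraction $t$ satisfies $t\le r(t)$, exactly as in the paper's proof. Your normality argument is also the paper's in substance, since proving $r(a)\le\sup_\nu r(a_\nu)$ via $a_\nu(\unit-p)=0$ and the strong limit is just an unpacking of the paper's identity $\ker(\sup_\nu a_\nu)=\bigcap_\nu\ker(a_\nu)$.
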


\begin{proof}
For $a,b\in M^+$, we remark the following facts:
\begin{itemize}
  \item $\varphi(a) \le \varphi(b)$ is equivalent to ${\rm Ker }(a) \supset {\rm Ker }(b)$.
  \item ${\rm Ker}(a) = {\rm Ker}(\varphi(a))$.
  \item $\varphi(a)\ge a$  if  $\|a\|\le 1$.
  \item $\varphi(a)=a$ if $a$ is a projection.
\end{itemize}

For $0\le a \le b$, it is clear that ${\rm Ker }(a) \supset {\rm Ker }(b)$.
So $\varphi$ is monotone.

For any contraction $c$, we have
\[  c^*ac\xi = 0 \Rightarrow a^{1/2}c \xi = 0 \Rightarrow \varphi(a)c\xi =0 \Rightarrow c^*\varphi(a)c\xi =0. \]
Since ${\rm Ker }(c^*ac) \subset {\rm Ker }(c^*\varphi(a)c)$,  
$\varphi(c^*\varphi(a)c) \le \varphi(c^*ac)$.  
Because $c^* \varphi(a)c$ is a contraction, 
$$
c^*\varphi(a)c\le \varphi(c^*\varphi(a)c) \le \varphi(c^*ac)
$$.
So $\varphi$ is supercongruent.

Let $\{a_\nu\}$ be a bounded increasing net in $ M^+$.
Since
\[  {\rm Ker}(\sup_\nu \varphi(a_\nu)) = \bigcap_\nu {\rm Ker}(\varphi(a_\nu))
    = \bigcap_\nu {\rm Ker}(a_\nu) = {\rm Ker}(\sup_\nu a_\nu) \]
and $\sup_\nu \varphi(a_\nu)$ is a projection, we have
\[  \varphi(\sup_\nu a_\nu) = \varphi (\sup_\nu \varphi(a_\nu)) = \sup_\nu \varphi(a_\nu).  \]
So $\varphi$ is normal on $ M^+$.
\end{proof}

We shall  study and compair these properties of being monotone, supercongruent and 
concave for  general non-linear positive maps $\varphi : A^+ \rightarrow B^+$ on 
the whole positive cone  $ A^+$   of  a $C^*$-algebra $A$.  
If $\varphi$ is concave, then $\varphi$ is monotone. But there exist no other relations 
between them in general as follows:

\begin{proposition}
Let $A$ and $B$ be  $C^*$-algebras and 
$\varphi : A^+ \rightarrow B^+$  be 
a non-linear positive map. 
If $\varphi$ is concave, then $\varphi$ is monotone.
\end{proposition}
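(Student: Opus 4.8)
The plan is to imitate the classical fact that a nonnegative concave function on $[0,\infty)$ is nondecreasing, but carried out directly in terms of the order structure of $B$ rather than via a limiting argument towards a ``point at infinity'' on a scalar ray. Fix $x,y\in A^+$ with $x\le y$; we must show $\varphi(x)\le\varphi(y)$. The crucial point is the \emph{direction} of the convex-combination trick: one should express the larger element $y$ as a convex combination of the smaller element $x$ and some element lying ``further out'', and then throw away the contribution of that outer element using only the positivity of $\varphi$.

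Concretely, I would first observe that for each integer $n\ge 1$ the element
$w_n:=ny-(n-1)x=x+n(y-x)$ belongs to $A^+$, because $y-x\ge 0$ and $x\ge 0$. A one-line computation then gives the identity $y=\tfrac{n-1}{n}\,x+\tfrac1n\,w_n$, which exhibits $y$ as a genuine convex combination of the two elements $x,w_n\in A^+$ with coefficients $\tfrac{n-1}{n},\tfrac1n\in[0,1]$.

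Applying concavity of $\varphi$ to this convex combination yields
$\varphi(y)\ge \tfrac{n-1}{n}\,\varphi(x)+\tfrac1n\,\varphi(w_n)$, and since $\varphi(w_n)\in B^+$ we may simply discard the last summand to obtain $\varphi(y)\ge \tfrac{n-1}{n}\,\varphi(x)$ for every $n\ge 1$. Letting $n\to\infty$, the right-hand side converges in norm to $\varphi(x)$ (it is merely a scalar rescaling of a fixed element), and since $B^+$ is norm-closed the inequality passes to the limit, giving $\varphi(x)\le\varphi(y)$. Hence $\varphi$ is monotone.

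I do not expect any serious obstacle: the whole argument is one algebraic identity together with the trivial limit $\tfrac{n-1}{n}\to 1$. The only subtlety worth flagging is why one cannot argue the ``obvious'' way by writing $x$ as a convex combination of $y$ and something in $A^+$: the relation $x\le y$ does not permit solving $x=ty+(1-t)z$ with $z\in A^+$, whereas writing $y$ in terms of $x$ and the larger element $w_n$ always works, and positivity of the map is precisely what makes the leftover term $\tfrac1n\varphi(w_n)$ harmless.
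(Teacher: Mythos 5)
Your proof is correct. It is also, at its core, the same idea as the paper's: both arguments travel along the ray $a_n = x + n(y-x)$ and exploit positivity of $\varphi(a_n)$, and your single application of concavity to $y=\tfrac{n-1}{n}x+\tfrac1n w_n$ (with $w_n=a_n$) is just a rearrangement of the paper's key estimate $\varphi(a_n)\le \varphi(x)+n\bigl(\varphi(y)-\varphi(x)\bigr)$, which the paper instead derives by iterating midpoint concavity ($a_{k+1}=\tfrac12(a_k+a_{k+2})$) and telescoping the differences. Where you genuinely diverge is in how the conclusion is extracted: the paper embeds the target algebra in $B(H)$ and argues by contradiction with a vector state, showing $\langle\varphi(a_n)\xi,\xi\rangle$ would become negative for large $n$ if $\varphi(x)\not\le\varphi(y)$; you instead discard the positive term $\tfrac1n\varphi(w_n)$ to get $\varphi(y)\ge\tfrac{n-1}{n}\varphi(x)$ and pass to the limit using norm-closedness of $B^+$. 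Your route is a bit cleaner: one application of concavity per $n$, no induction, no contradiction, and no need to represent the codomain on a Hilbert space; the paper's route makes the quantitative linear growth bound explicit, which is the same information packaged differently. Both are complete proofs.
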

\begin{proof}  Assume that $\varphi$ is concave. 
For $0\le a \le b$, we define $\{a_{k}\}$ as follows:
\[ a_{0}=a, \; a_{1}=b, a_{k} = a + k(b-a) \geq  0 \ \  k=0,1,2,\ldots  \]
Then $a_{k+1}=\frac{a_{k}+a_{k+2}}{2}$.  
By the cancavity of $\varphi$, it follows
\[   \varphi(a_{k+1}) \ge \frac{1}{2}(\varphi(a_{k})+ \varphi(a_{k+2}) ) .  \]
So we have
\[  \varphi(a_{k+2})-\varphi(a_{k+1}) \le \varphi(a_{k+1}) -\varphi(a_{k}), \; k=0,1,2,\ldots \]
and
\[  \varphi(a_{n}) = \varphi(a_{0}) + \sum_{k=1}^{n} (\varphi(a_{k})-\varphi(a_{k-1}) )
     \le \varphi(a_{0}) + n (\varphi(a_{1})-\varphi(a_{0}) ). \]
We may assume that $A \subset B(H)$ for some Hilbert space $H$. 
We shall show that  $\varphi(a) \le \varphi(b)$. 
On the contrary, suppose it were not so. 
Then there exists a vector $\xi \in H$ such that 
$ \langle \varphi(a)\xi | \xi \rangle > \langle \varphi(b)\xi | \xi \rangle $. 
That is, $ \langle(\varphi(a_1)-\varphi(a_0))\xi | \xi \rangle <0). $
then $\langle \varphi(a_{n})\xi | \xi \rangle <0$ for a sufficiently large $n$.
This contradicts to that $\varphi(a_n) \ge 0$ for any $n$.  Hence we have that 
$\varphi(a) \le \varphi(b)$.
\end{proof}

\begin{proposition}
There exist many non-linear positive maps  on the positive cones of some 
$C^*$-algebras which satisfy anyone of the following  conditions:
\begin{enumerate}
  \item[$(1)$] $\varphi$ is concave and not supercongruent.
  \item[$(2)$] $\varphi$ is monotone, not concave and not supercongruent.
  \item[$(3)$] $\varphi$ is not monotone and supercongruent.
  \item[$(4)$] $\varphi$ is not monotone and not supercongruent.
  \end{enumerate}
\end{proposition}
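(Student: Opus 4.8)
The plan is to construct an explicit example for each of (1)--(4). The economical route is to work inside commutative $C^*$-algebras, where all three properties collapse to elementary pointwise conditions; three of the four cases can then be settled by scalar functional calculus on $\mathbb{C}$ itself, and only case (1) will need a two-point spectrum.

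First I would record the commutative dictionary. For continuous $f:[0,\infty)\to[0,\infty)$ put $\varphi_f(a)=f(a)$, first on $A=B=\mathbb{C}$ (so $A^+=[0,\infty)$); the same statements hold verbatim on $C(X)^+$ for any $X$. Reading off the definitions: $\varphi_f$ is monotone iff $f$ is non-decreasing; $\varphi_f$ is concave iff $f$ is concave; and, since $c\varphi_f(a)c=c^2f(a)$ while $\varphi_f(cac)=f(c^2a)$, $\varphi_f$ is supercongruent iff $f(st)\ge sf(t)$ for all $s\in[0,1]$ and $t\ge 0$, equivalently iff $t\mapsto f(t)/t$ is non-increasing on $(0,\infty)$. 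Now the three cases are immediate: for (2) take $f(t)=t^2$, which is non-decreasing, strictly convex (hence not concave), and has $f(t)/t=t$ increasing (hence not supercongruent); for (3) take $f(t)=te^{-t}$, for which $f(t)/t=e^{-t}$ is decreasing (so $\varphi_f$ is supercongruent) while $f$ decreases past $t=1$ (so $\varphi_f$ is not monotone, hence a fortiori not concave); for (4) take $f(t)=(1-t)^2$, for which $f(0)=1>0=f(1)$ (so not monotone) and $f(t)/t$ is not non-increasing (so not supercongruent). Each $f$ is continuous and nonnegative, so each $\varphi_f$ is a genuine nonlinear positive map on a positive cone.

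For (1), functional calculus cannot help: if $f\ge 0$ is concave then $f(st)\ge sf(t)+(1-s)f(0)\ge sf(t)$, so every concave positive map on $\mathbb{C}^+$ is automatically supercongruent (and on $B(H)^+$ a monotone $\varphi_f$ forces $f$ operator monotone, hence operator concave, hence $\varphi_f$ concave). So I would instead take $A=B=\mathbb{C}^2$ and $\varphi(a_1,a_2)=(\sqrt{a_1},\sqrt{a_1})$ --- or the linear map $(a_1,a_2)\mapsto(a_1,a_1)$ if nonlinearity is not insisted upon. Since $a\mapsto a_1$ is a state and $\sqrt{\cdot}$ is concave and increasing, $\varphi$ is concave, hence monotone; but for the positive contraction $c=(0,1)$ and $a=(1,1)$ one has $c\varphi(a)c=(0,1)$ and $\varphi(cac)=(0,0)$, so $c\varphi(a)c\not\le\varphi(cac)$ and $\varphi$ is not supercongruent.

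Together with Example \ref{monotonesupercongruent} (concave, monotone and supercongruent) and Proposition \ref{rangeprojection} (monotone and supercongruent but not concave), these four examples realize every combination of the three properties consistent with ``concave $\Rightarrow$ monotone'', which is exactly the absence of further relations asserted in the introduction. I expect the only points requiring thought to be the choice of $f$ in case (3) --- one needs a function that turns back down while $f(t)/t$ stays non-increasing, which excludes the most obvious candidates and essentially forces something of the type $te^{-t}$ (or $t(1-t)$ truncated at $t=1$) --- and the observation in case (1) that the obstruction is genuine, so that one must leave the single-point (equivalently, the scalar-cone) setting; all remaining verifications are one-line pointwise inequalities.
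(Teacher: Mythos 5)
Your proposal is correct, and it settles all four cases, but by a genuinely different and more elementary route than the paper. You reduce (2), (3), (4) to scalar functional calculus on $\mathbb{C}^+$ via the dictionary ``monotone $\Leftrightarrow f$ non-decreasing, concave $\Leftrightarrow f$ concave, supercongruent $\Leftrightarrow sf(t)\le f(st)$ for $s\in[0,1]$, i.e.\ $f(t)/t$ non-increasing,'' and your choices $t^2$, $te^{-t}$, $(1-t)^2$ verify correctly; your observation that on the scalar cone concavity forces supercongruence (since $f(st)\ge sf(t)+(1-s)f(0)\ge sf(t)$) correctly isolates why case (1) needs a larger algebra, and your $\mathbb{C}^2$ example $\varphi(a_1,a_2)=(\sqrt{a_1},\sqrt{a_1})$ with $c=(0,1)$, $a=(1,1)$ does fail supercongruence while being concave. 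The paper instead proves the proposition by a list of operator-algebraic examples: for (1) the compressions $a\mapsto\tau(a)\unit$ on a ${\rm II}_1$-factor and $a\mapsto\langle a\xi,\xi\rangle\unit$ on $B(H)$ (which fail supercongruence for essentially the same reason as your $\mathbb{C}^2$ example: a projection annihilates the argument but the state does not see it); for (2) the map $E(a)^2$ through a conditional expectation onto a MASA; for (3) several maps built from the norm, invertibility, rank or trace of the range projection; and for (4) $a\mapsto a^2$ and $a\mapsto\unit\vee a$ on $B(H)$. What your approach buys is brevity and transparency — everything is a one-line pointwise inequality, which fully suffices for the existence statement; what the paper's approach buys is a stock of genuinely noncommutative examples (in particular supercongruent, non-monotone maps not given by any functional calculus), which is the raw material the paper reuses around its later characterization theorem. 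Note also that the paper's own examples for (1) are restrictions of linear maps, so your insistence on the nonlinear $\sqrt{a_1}$ variant is, if anything, stronger than what the paper records.
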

\begin{proof} In each case, this was verified by construting many concrete examples  
in the below. 
\end{proof}
Remark. We shall show that if $\varphi$ is  monotone and supercongruent, then 
$\varphi$ is concave in the next section. \\

\noindent
Example (1-1)  let $M$ be a  ${\rm II}_1$-factor and $\tau$ the trace on $M$. 
Define $\varphi : M^+ \rightarrow M^+$ by $\varphi(a) = \tau( a )\unit$ for 
$a \in M^+ $.  Since  $\varphi$ is the restriction of a linear map, 
$\varphi$ is concave.
Let $p$ be a projention in $M$ with $\tau(p)=\frac{1}{2}$.
Since
\[  p \tau(\unit)p = p \nleq \tau(p\unit p)\unit = \tau(p) \unit =\frac{1}{2}\unit, \]
$\varphi$ is not supercongruent.

\noindent
Example (1-2) Let $H$ be a Hilbert space and $M=B(H)$. Condsider a projection  
$p(\neq \unit)$ of $B(H)$ and take a vector $\xi \in pH$ with $\|\xi\|=1$. 
Define $\varphi : M^+ \rightarrow M^+$ by 
$\varphi(a) = \langle a\xi, \xi \rangle \unit$ for $a \in B(H)^+$.  
Since $\varphi$ is the restriction of a linear kap,  $\varphi$ is concave. 
By the fact 
\[   (\unit-p)\varphi(p)(\unit -p) = \unit -p \nleq \varphi((\unit-p)p(\unit-p)=0, \]
$\varphi$ is not supercongruent.

\noindent
Example (2-1) Let  $H = \ell ^2(\mathbb N)$ and $M = B(H)$.  Consider 
a maximal abelian *-subalgebra $A \cong \ell^{\infty}(\mathbb N)$  
of $B(H)$ and a conditional expectation $E$ of $B(H)$ onto $A$. 
We define $\varphi(a ) = E(a)^{2}$ for $a \in B(H)^+$. 
Since $E$ is positive linear map and the mapping 
$\A^+ ni a \mapsto a^{2} \in A^+$ is monotone, $\varphi$ is monotone. 
By the fact
\begin{gather*}
  \frac{\varphi(0\unit)+\varphi(2\unit)}{2} =2\unit \nleq \unit =\varphi(\unit) = \varphi(\frac{0\unit +2\unit}{2}), \\
  \frac{\unit}{2}\varphi(\unit)\frac{\unit}{2}=\frac{\unit}{4} \nleq \frac{\unit}{16}=\varphi(\frac{\unit}{4})
  =\varphi(\frac{\unit}{2}\cdot \unit \cdot \frac{\unit}{2}),
\end{gather*}  
$\varphi$ is not concave and not superconvergent.

\noindent
Example (3-1) Let $H$ be a Hilbert space and $M=B(H)$.  For $a \in B(H)^+$, define 
$\varphi(a) = \begin{cases} \unit  & \|a\|\le 1  \\  a & \|a\|>1 \end{cases}$. \\
Let $p(\neq \unit)$ be a projection.
Then we have 
\[  \varphi(\frac{1}{2}p) = \unit \nleq \varphi(2p)=2p.   \]
So $\varphi$ is not monotone.

Let $c \in B(H)^*$ be a contraction.  If $\|a\|\le 1$, then $c^{*}\varphi(a) c=c^{*}c\le \unit = \varphi(c^{*}ac)$.
If $\|a\|>1$, then $\varphi(a)=a$ and
\begin{align*}
   \varphi(c^{*}ac) & = \begin{cases} c^{*}ac, \quad & \|c^{*}ac\| >1 \\
                                  \unit,  & \|c^{*}ac\|\le 1  \end{cases}  \\
                         &  \ge c^{*}\varphi(a)c .  
\end{align*}
So $\varphi$ is supercongruent.

\noindent
Example (3-2) Let $\M$ be a ${\rm II}_1$-factor.  
Define $\varphi : M^+ \rightarrow M^+$ by
$\varphi(a) = \begin{cases} \unit  &  a \text{ is invertible}  \\
                                  2 \unit & a \text{ is not invertible}  \end{cases} $ ,\\
for $a \in M^+$.  
It is clear that $\varphi$ is not monotone, since, for any non invertible positive contraction $a$,
\[ \varphi(a) =2 \unit \ge \unit =\varphi(\unit), \text{ and } a\le \unit.  \]
If $a$ is invertible, then
\[  c^*\varphi(a) c = c^{*}c \le \unit \le\varphi(c^{*}ac).  \]
If $a$ is not invertible, 
\[  c^{*}\varphi(a)c = 2 c^{*}c \le 2\unit =\varphi(c^{*}ac),  \]
where we use the fact that a left invertible element in a factor of type ${\rm II}_1$ is invertible.
So we have that $\varphi$ is supercongruence.

\noindent
Example (3-3)  Let $M$ be a ${\rm II}_1$-factor 
and $\tau:$ the normalized trace on $M$. Let 
$\alpha :[0,1]\longrightarrow [0,\infty)$ be  a decreasing and non-constant function. 
For $a \in M^+$, put $r(a)$ be the range projection of $a$. 
Define $\varphi : M^+ \rightarrow M^+$ by
\[    \varphi(a) = \alpha (\tau( r(a) ) ) \unit .  \]
By definition, there exist $t_{0}$, $t_{1}$ with  $0\le t_{0}<t_{1}$ and $\alpha(t_{0})>\alpha(t_{1})$.
We can chose projections $p,q$ with $\tau(p)=t_0$, $\tau_(q)=t_1$, and $p\le q$.
Then we have $\varphi(p) > \varphi(q)$.
So $\varphi$ is not monotone.

For $x\in M$, we denote $r(x)$ (resp. $s(x)$) the range projection of $x$ (resp. the support projection of $x$).
Let $c,x \in M^+$ with $\|c\|\le 1$. 
We set $p=r(x)$ and consider the polar decomposition of pc as follows:
\[  pc = hv ,  \]
where $h\ge 0$ and $r(v)=s(h)\le p$, $v^{*}v=s(pc)$, and $vv^{*}=s(h)$.
Since $M$ is a factor of type ${\rm II}_{1}$, there exists a unitary $u\in M$ satisfying $u^{*}s(h) = v^{*}$.
Then we have
\begin{align*}
  s(c^{*}xc) & = s(c^{*}pxpc) = s(v^{*}hxhv) = s(u^{*}hxhu) \\
    & = u^{*} s(hxh)u \le u^{*}s(p)u = u^{*}s(x) u.
\end{align*}
Since
\[  \tau(s(c^{*}xc))\le \tau(u^{*}s(x)u) = \tau(s(x)),  \]
we can prove the  supercongruence of $\varphi$ as follows:
\[  \varphi(c^{*}xc) = \alpha(\tau(c^{*}xc))\unit \ge \alpha(\tau(s(x)))\unit \ge c^{*}\alpha(\tau(s(x)))c
    = c^{*}\varphi(x) c. \]

\noindent
Example (3-4)  Let $H$ be a Hilbert space and $M=B(H)$.  For $a \in B(H)^+$, define
\[   \varphi(a) = \begin{cases} \unit, \quad & {\rm rank}(a)=\infty \\
                       2\unit  & {\rm rank}(a)<\infty  \end{cases} .  \]
Let $p$ be a finite rank projection.
By the fact $\varphi(p)=2 \unit >\unit = \varphi(\unit)$, $\varphi$ is not monotone.
If ${\rm rank}(a) <\infty$, then ${\rm rank}(c^*ac)<\infty$ and
\[  c^{*}\varphi(a)c =2c^{*}c \le 2 I = \varphi(c^{*}ac).\]
If ${\rm rank}(a) =\infty$, then 
\[  c^{*}\varphi(a)c =c^{*}c \le  I \leq  \varphi(c^{*}ac).\]
So $\varphi$ is supercongruent.

\noindent
Example (4-1)  Let $H$ be a Hilbert space and $M=B(H)$. 
For $a \in B(H)^+$, define $\varphi(a)= a^{2}$. 
Because $f(x) = x^2$ is not an operator monotone function, 
$\varphi$ is not monotone. 
\[  \frac{1}{2}\unit \cdot \varphi(\unit) \cdot \frac{1}{2}\unit = \frac{1}{4}\unit \nleq
    \varphi(\frac{1}{2}\unit \cdot \unit \cdot \frac{1}{2}\unit) = \varphi(\frac{1}{4}\unit) =\frac{1}{16}\unit. \]
This implies that $\varphi$ is not supercongruent.

\noindent
Example (4-2) Let $f$ be a real function $f(x) = 1 \vee x$.  
Let $H$ be a Hilbert space and $M=B(H)$. For $a \in B(H)^+$, define
$\varphi(a)= \unit \vee a = f(a)$ by a functional calculus. 
Consider
\begin{gather*}  a=\begin{pmatrix} 1 & 1 \\ 1 & 1 \end{pmatrix} \le 
    b= \begin{pmatrix} 3 & 0 \\ 0 & 3/2 \end{pmatrix}. \\
       \varphi(a) = \begin{pmatrix} 3/2 & 1/2 \\ 1/2 & 3/2 \end{pmatrix} \nleq
    \varphi(b) = \begin{pmatrix} 3 & 0 \\ 0 & 3/2 \end{pmatrix}. 
\end{gather*}
Thus $\varphi$ is not monotone.

Consider 

\begin{gather*}  a=\begin{pmatrix} 2 & 0 \\ 0 & 0 \end{pmatrix} , \quad
    c= \begin{pmatrix} 1/2 & 1/2\\ 1/2& 1 /2\end{pmatrix}. \\
    c\varphi(a)c = c\begin{pmatrix} 2 & 0 \\ 0 & 1 \end{pmatrix}c =\begin{pmatrix} 3/4 & 3/4 \\ 3/4 & 3/4 \end{pmatrix} \nleq
    \varphi(cac) = \begin{pmatrix} 1 & 0 \\ 0 & 1 \end{pmatrix}
\end{gather*}
Hence $\varphi$ is not supercongruent.

\section{Characterization of monotone maps given by  Borel functional calculus}
Let $M$ be a von Neumann algebra on a Hilbert space $H$ and  
$\varphi : M^+ \rightarrow M^+$ be the non-linear positive map defined by the 
range projection $\varphi(a)$ of $a \in M^+$. Then we showed that $\varphi$ 
is monotone, supercongruent and normal .  This is a typical example of non-linear 
 positive map which is  
 monotone, supercongruent and normal  but is not a form of continuous functional  
 calculus.  We should remark that this map is given by a  Borel functional culculus of the 
 Borel function $\chi_{(0,\infty)}$ on $[0,\infty )$ as follows:
\[   \varphi(a) = \chi_{(0,\infty)}(a), \]
where 
\[ \chi_{(0,\infty)} (t) = \begin{cases} 0 & t = 0  \\
1 & t>0  \end{cases} .\]
 In this section, we shall characterize monotone maps given by  Borel functional calculus. 
 At first we recall Borel functional calculus. 
 Let $\Omega$ be a metrizable topological space and $C(\Omega)$ a set of all complex valued continuous functions on $\Omega$.
We denote by $\mathcal{B}(\Omega)$ the set of all bounded complex Borel functions on $\Omega$. For a bound self-adjoint linear operator $a\in B(H)$ 
there exists a correspondence
\[   \mathcal{B}(\sigma(a)) \ni f \mapsto f(a) \in B(H)  \]
satisfying
\begin{itemize}
  \item[(1)] $f(a) = \alpha_0 \unit + \alpha_1 a + \cdots + \alpha_n a^n$ for any polynomial $f(\lambda) =\alpha_0 + \alpha_1\lambda + \cdots + \alpha_n\lambda^n$.
  \item[(2)] $(f_n)_n$ is a bounded sequence in $\mathcal{B}(\sigma(a))$.
If $(f_n)_n$ tends to $f\in \mathcal{B}(\sigma(a))$ with respect to the point-wise convergent topology, then the sequence $(f_n(a))_n$ of operators tends to the 
operator $f(a)$ in the strong operator topology.
  \item[(3)] If $f$ is continuous on $\sigma(a)$, then the Borel functional calculus 
  coincides with the continuous functional calculus. 
\end{itemize}
Moreover, this correspondence is a *-homomorphism of $\mathcal{B}(\sigma(a))$ onto the von Neumann algebra generated by $a$.
(see \cite{stratila}2.20.)
We call $f(a)$ the Borel functional calculus of $a$ by $f\in \mathcal{B}(\sigma(a))$.

The following fact is well-known (see, \cite[Theorem V.2.3]{bhatia1}).
%
%
\begin{lemma}
Let $f$ be an operator monotone {\it continuous} function on an interval $J$ 
which contains $0$ and $f(0)\ge 0$.
Then we have
\[   c^{*}f(a)c \le f(c^{*}ac),  \]
for  any $a=a^{*}\in B(H)$ with $\sigma(a)\subset J$ and
any $c\in B(H)$ with $\|c\|\le 1$.
\end{lemma}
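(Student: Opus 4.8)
The plan is to reduce the general operator‑monotone statement to the special case already recorded in the literature, where $J=[0,\infty)$, $f(0)\ge 0$, and $c$ is positive — which is essentially the content of \cite[Theorem V.2.3]{bhatia1} — and then remove the extra hypotheses by simple reductions. The two things that must be handled are: (i) $f$ is defined on a general interval $J$ containing $0$ rather than on $[0,\infty)$; and (ii) $c$ is an arbitrary contraction, not necessarily positive.

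First I would deal with the shift to $[0,\infty)$. Since $f$ is operator monotone and continuous on $J$ with $f(0)\ge 0$, and $\sigma(a)\subset J$, I can choose a real number $\lambda$ large enough that $a+\lambda\unit\ge 0$ and $\sigma(a+\lambda\unit)$ together with $\sigma(c^*(a+\lambda\unit)c)=\sigma(c^*ac)+\lambda\|c\|^2\cdots$ — more carefully, I work with $g(t)=f(t-\lambda)$ defined on $J+\lambda\subset[0,\infty)$; but $g(0)=f(-\lambda)$ need not be nonnegative. So the cleaner route is to translate only so that the relevant spectra sit inside $[0,\infty)$ and invoke the known inequality on the interval $[0,\infty)$ directly for the translated function, keeping track of where $0$ lands. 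Concretely: set $a'=a-\inf\sigma(a)\,\unit\ge 0$; then $f$ translated is operator monotone on an interval of $[0,\infty)$, but one must still ensure the value at the relevant endpoint is controlled. An alternative, and I think the one I would actually write, is to note that the conclusion is known whenever $J=[0,\infty)$ and to extend $f$ from $J$ to $[0,\infty)$ as an operator monotone continuous function (possible since operator monotone functions extend, or simply because we only need the inequality on the finite spectrum involved and can replace $f$ by any operator monotone continuous majorant/interpolant agreeing on $\sigma(a)\cup\sigma(c^*ac)$ — here the integral representation of operator monotone functions on an interval is the tool).

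The reduction from a general contraction $c$ to a positive one is the standard polar‑decomposition trick: write $c=u|c|$ with $u$ a partial isometry (or, enlarging the space, a unitary) and $|c|=(c^*c)^{1/2}$ a positive contraction. Then $c^*f(a)c=|c|\,u^*f(a)u\,|c|$ and $u^*f(a)u=f(u^*au)$ because $u^*u$ dominates the support of $f(a)$ and conjugation by a unitary commutes with continuous functional calculus; similarly $c^*ac=|c|\,u^*au\,|c|$. Applying the positive‑contraction case to the positive operator $u^*au$ and the positive contraction $|c|$ gives $|c|\,f(u^*au)\,|c|\le f(|c|\,u^*au\,|c|)$, which is exactly $c^*f(a)c\le f(c^*ac)$. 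One must be mildly careful that the unitary dilation of $u$ does not disturb $\sigma$ being inside $J$, but since $u^*au$ has spectrum in $\{0\}\cup\sigma(a)\subset J$ this is fine, using $f(0)\ge 0$ to control the extra $0$ eigenspace.

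The main obstacle is the first reduction: matching the interval $J$ to the domain $[0,\infty)$ of the textbook statement while preserving both operator monotonicity and the sign condition at $0$. I expect the paper's proof simply quotes \cite[Theorem V.2.3]{bhatia1} essentially verbatim (that reference may already be stated for general $J$), so that the only real work — if any — is the polar‑decomposition step, which is routine. Accordingly, the proof I would write is short: invoke the cited theorem for positive $c$, then run the polar‑decomposition argument above to pass to arbitrary contractions.
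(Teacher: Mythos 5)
The paper offers no proof of this lemma at all: it records it as a well-known fact and cites \cite[Theorem V.2.3]{bhatia1}, so your guess that the paper essentially quotes the textbook is accurate. Judged as a standalone argument, though, your proposal has a genuine gap precisely at the step you yourself flag as the main obstacle: the reduction from a general interval $J$ to $[0,\infty)$. Operator monotone functions on $J$ need not extend to operator monotone functions on $[0,\infty)$ (think of $t/(1-t)$ on $(-\infty,1)$, or $\tan t$ on $(-\pi/2,\pi/2)$), and no interpolant agreeing with $f$ on $\sigma(a)\cup\sigma(c^{*}ac)\cup\{0\}$ can rescue the argument, because the inequality you are trying to reduce to the half-line case is simply not a consequence of operator monotonicity on a general interval. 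Concretely, take $f(t)=t/(1-t)$ on $J=(-\infty,1)$, which is operator monotone with $f(0)=0$, and take the scalars $a=\tfrac12$ and $c=2^{-1/2}$ (a positive contraction): then $c^{*}f(a)c=\tfrac12$ while $f(c^{*}ac)=f(\tfrac14)=\tfrac13$, so $c^{*}f(a)c\nleq f(c^{*}ac)$. What makes the case $J=[0,\infty)$ (the only case the paper actually uses) work is that operator monotonicity there forces operator concavity; the correct hypothesis on a general interval containing $0$ is operator concavity with $f(0)\ge 0$ (equivalently, operator convexity of $-f$ with $-f(0)\le 0$, which is the Hansen--Pedersen form of the cited theorem), not operator monotonicity. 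So the reduction you propose cannot exist in the stated generality; a correct write-up must either restrict to $J=[0,\infty)$ or add concavity.

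The second reduction also contains an error as written: for a partial isometry $u$ one does not have $u^{*}f(a)u=f(u^{*}au)$ (test $f(t)=t+1$), and the claim that $u^{*}u$ dominates the support of $f(a)$ is unjustified. This part is repairable --- pass to $H\oplus H$, replace $a$ by $a\oplus 0$ and $c$ by $c\oplus 0$, use $f(0)\ge 0$ to absorb the extra kernel, and note that the enlarged contraction admits a unitary polar part --- but it is an unnecessary detour, since the standard proofs (and the theorem the paper cites) already handle arbitrary contractions directly via a $2\times 2$ unitary dilation of $c$.
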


%
%
\begin{theorem}
Let $M$ be an infinite-dimensional factor on a Hilbert space $H$ and 
$\varphi : M^+ \longrightarrow M^+$ be a non-linear positive map. 
Then the following are equivalent: 
\begin{enumerate}
\item[$(1)$]  $\varphi$ is monotone and supercongruent. 
\item[$(2)$] There exists a Borel function $f:[0,\infty) \longrightarrow [0,\infty)$ such that 
$f$ is continuous on $(0,\infty)$ , operator monotone on $(0,\infty)$ with
\[   f(0) \le \lim_{t\to 0+}f(t),   \]
and  $\varphi(a)$ is equal to the Borel functional calculus $f(a)$ of $a$ by $f$ 
for any $a\in M^+$.
\end{enumerate}
\label{Borelcharacterization}
\end{theorem}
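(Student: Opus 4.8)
The strategy is to prove the two implications separately; $(2)\Rightarrow(1)$ is short and $(1)\Rightarrow(2)$ carries the weight.

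For $(2)\Rightarrow(1)$: since the given $f$ is increasing on $(0,\infty)$ and nonnegative, $L:=\lim_{t\to0+}f(t)\in[0,\infty)$ exists and, by hypothesis, $s:=L-f(0)\ge0$. Define $g:[0,\infty)\to[0,\infty)$ by $g=f$ on $(0,\infty)$ and $g(0)=L$; then $g$ is continuous on $[0,\infty)$ and operator monotone on $(0,\infty)$, hence operator monotone on $[0,\infty)$ by the usual $\varepsilon$-shift-and-limit argument, and $g\ge L$. Thus $\tilde g:=g-s$ is a continuous operator monotone function on $[0,\infty)$ with $\tilde g(0)=f(0)\ge0$, and as Borel functions $f=\tilde g+s\,\chi_{(0,\infty)}$. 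Consequently $\varphi(a)=\tilde g(a)+s\,\chi_{(0,\infty)}(a)=\tilde g(a)+s\,r(a)$, where $r(a)$ is the range projection of $a$. Monotonicity of $\varphi$ is then the sum of the monotonicity of $a\mapsto\tilde g(a)$ (a continuous operator monotone functional calculus) and that of $a\mapsto r(a)$ (Proposition \ref{rangeprojection}); supercongruence of $\varphi$ follows by adding the inequality $c\,\tilde g(a)\,c\le\tilde g(cac)$ — the preceding Lemma applied on $J=[0,\infty)$, using $\tilde g(0)\ge0$ — to $s$ times $c\,r(a)\,c\le r(cac)$ (supercongruence of the range-projection map, Proposition \ref{rangeprojection}).

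For $(1)\Rightarrow(2)$ I would proceed through a chain of reductions. First a scalarity lemma: if $x\in M^+$ satisfies $cxc\le x$ for every positive contraction $c\in M$, then $x\in\C\unit$; indeed, taking $c=\unit-p$ for a projection $p$, the inequality $(\unit-p)x(\unit-p)\le x$ forces the $(\unit-p)$-to-$p$ off-diagonal block of $x$ to vanish by positivity of the resulting block matrix, so $x$ commutes with every projection and hence, $M$ being a factor, lies in $\C\unit$. The same block-matrix observation, combined with monotonicity, yields a commutation principle: for $a\in M^+$ and any spectral projection $e$ of $a$, supercongruence with $c=e$ gives $e\varphi(a)e\le\varphi(eae)=\varphi(ae)\le\varphi(a)$, whence $e$ commutes with $\varphi(a)$. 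So $\varphi(a)$ commutes with $W^*(a)$ and is block diagonal against any orthogonal family of spectral projections of $a$. Next I extract $f$: supercongruence with an arbitrary positive contraction $c$ plus monotonicity gives $c\,\varphi(t\unit)\,c\le\varphi(tc^2)\le\varphi(t\unit)$, so $\varphi(t\unit)=f(t)\unit$ with $f(t)\ge0$ by the scalarity lemma, and $f$ is nondecreasing (so automatically $f(0)\le\lim_{t\to0+}f(t)$). For a projection $e$ and $t\ge0$: supercongruence with $c=\unit-e$ together with $0\le te$ pins $(\unit-e)\varphi(te)(\unit-e)=f(0)(\unit-e)$, while $e\varphi(t\unit)e\le\varphi(te)\le\varphi(t\unit)$ (supercongruence with $c=e$, then monotonicity), compressed by $e$, gives $e\varphi(te)e=f(t)e$; since $\varphi(te)$ commutes with $e$ we get $\varphi(te)=f(t)e+f(0)(\unit-e)=f(te)$. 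For $a=\sum_i t_ie_i$ with finite spectrum, block diagonality gives $\varphi(a)=\sum_i e_i\varphi(a)e_i$, and from $e_i\varphi(a)e_i\le\varphi(e_iae_i)=\varphi(t_ie_i)$ together with $\varphi(t_ie_i)\le\varphi(a)$ (monotonicity, $t_ie_i\le a$), compressing by $e_i$, one gets $e_i\varphi(a)e_i=f(t_i)e_i$, hence $\varphi(a)=f(a)$.

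Finally, from $\varphi=f(\cdot)$ on finite-spectrum elements and monotonicity, $f(a)\le f(b)$ whenever $a\le b$ have finite spectrum in $(0,\infty)$; embedding each $M_k$ unitally into a proper corner of the infinite-dimensional factor $M$ and padding the complementary corner by a positive scalar, this transfers to $k\times k$ matrices, so $f$ is operator monotone on $(0,\infty)$ and therefore continuous there by Loewner's theorem. For general $a\in M^+$ with kernel projection $p_0$ and $p_+=\unit-p_0$: supercongruence with $c=p_0$ plus monotonicity force $p_0\varphi(a)p_0=f(0)p_0$; sandwiching $a$ between finite-spectrum $a_-\le a\le a_+$ coming from a partition of $\sigma(a)$ and compressing $f(a_-)\le\varphi(a)\le f(a_+)$ by $p_+$, the continuity of $f$ on $(0,\infty)$ and the fact that the spectral measure of $a$ has no atom at $0$ on $p_+H$ make both ends converge strongly to $f(a)|_{p_+H}$ as the mesh and the first breakpoint tend to $0$; hence $p_+\varphi(a)p_+=p_+f(a)p_+$, and since both $\varphi(a)$ and $f(a)$ commute with $p_0$ we conclude $\varphi(a)=f(a)$. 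The main obstacle is precisely this passage from the soft consequences of monotonicity and supercongruence to the rigid functional-calculus form: the scalarity lemma and the matrix-embedding step are where the factor and infinite-dimensionality hypotheses are genuinely used, and the $0$-eigenspace is the one place where the data escapes continuity — which is exactly why the limiting constraint $f(0)\le\lim_{t\to0+}f(t)$ is the optimal one.
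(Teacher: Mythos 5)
Your proof is correct, and the comparison splits naturally: your $(2)\Rightarrow(1)$ and the opening of $(1)\Rightarrow(2)$ are essentially the paper's own argument (the same decomposition $f=\tilde g+s\,\chi_{(0,\infty)}$ into a continuous operator monotone part plus a multiple of the range-projection map; the same block-positivity trick $p\varphi(a)p\le\varphi(pap)\le\varphi(a)$ forcing commutation; the same extraction of $f$ from $\varphi(t\unit)=f(t)\unit$ via factoriality and the same computation $\varphi\bigl(\sum_i t_ie_i\bigr)=\sum_i f(t_i)e_i$), but you finish $(1)\Rightarrow(2)$ by a genuinely different route. The paper first proves a restricted norm continuity of $\varphi$ at invertible points using the geometric mean $c_n=a^{-1}\,\#\,a_n$ (so that $a_n=c_nac_n$), deduces continuity of $f$ on $(0,\infty)$ and $\varphi=f(\cdot)$ on invertibles, and then treats non-invertible $a$ by an increasing finite-spectrum approximation $a_n=\tilde g_n(a)$, contractions $c_n$ with $c_nac_n=a_n$, a vector splitting along the support projection, and the normality of $\varphi_f$ (again via $f=F+k\chi_{(0,\infty)}$) to identify weak limits. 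You instead note that matrices already have finite spectrum, so the finite-spectrum identity plus embeddings of $M_k$ into corners of $M$ (with scalar padding) yields operator monotonicity of $f$ on $(0,\infty)$ immediately, continuity then coming for free from Loewner's theorem; and you handle every $a\in M^+$, invertible or not, in one stroke by the two-sided sandwich $f(a_-)\le\varphi(a)\le f(a_+)$ compressed by $p_+=\unit-p_0$, where the upper end converges in norm and the lower end only strongly (exactly because of the possible jump $f(0)<\lim_{t\to0+}f(t)$, which is harmless since $E((0,\delta])\to 0$ strongly on $p_+H$), combined with $p_0\varphi(a)p_0=f(0)p_0$ and the commutation of $\varphi(a)$ with $p_0$. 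This avoids both the geometric-mean continuity lemma and the normality discussion; the only thing you lose is that the paper's Corollary reads off norm continuity on $(M^+)^{-1}$ and normality of $\varphi$ directly from its proof, whereas in your approach these must be derived afterwards from the established form $\varphi=\varphi_f$ (which is straightforward).
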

\begin{proof}
(1) $\Rightarrow$ (2): Assume that $\varphi$ is monotone and supercongruent.  
Firstly, we shall show that,  for any $a \in M^+$ and any projection $p \in M$, if 
$ap = pa$, then  $p\varphi(a) = \varphi(a) p = p\varphi(pap)p$. 

In fact, suppose that $ap = pa$.  Then $pap=a^{1/2}pa^{1/2}\le a$. 
Since $\varphi$ is supercongruent and monotone, we have
\[  p \varphi(a) p \le \varphi (pap) \le \varphi(a).  \]
The positivity of $\varphi(a)-p\varphi(a)p$ implies $p\varphi(a)(\unit-p)=0$ and
$(\unit -p)\varphi(a)p=0$. 
So $p\varphi(a) = \varphi(a) p = p\varphi(pap)p$. 

Take $t\unit$ for any $t \in [0, \infty)$.  Because $p(t\unit) = (t\unit)p$, we have that 
$p\varphi(t\unit) = \varphi(t\unit)p$. Since $M$ is a factor, $\varphi(t\unit)$ is a 
scalar operator $f(t)\unit$.  Thus $f$ turns out to be a (not necessarily continuous) 
function $f:[0,\infty)\longrightarrow [0,\infty)$ such that 
$$
\varphi(t\unit) = f(t)\unit \ \ {\   for \ any \ } t \in [0, \infty). 
$$ 
By the monotonicity of $\varphi$, $f$ is increasing on $[0,\infty)$.
In particular, we have 
\[   f(0) \le \lim_{t\to 0+} f(t).   \]

Moreover for any $a \in M^+$ and any $\xi \in {\rm Ker }  a$, we have 
$$
\varphi(a)\xi = f(0)\xi 
$$
In fact, let $r$ be the range projection of $a$.  Then $\unit - r$ is the projection 
onto the  ${\rm Ker }  a$.  Since $ar = ra$,  we have that 
$\varphi(a)r = r\varphi(a) = r\varphi(rar)r$.  Similarly, 
$\varphi(a)(\unit - r) = (\unit - r)\varphi(a)$ and  
$$
\varphi(a)(\unit - r) 
=  (\unit - r)\varphi( (\unit - r)a (\unit - r)) (\unit - r)
= (\unit - r)\varphi( 0\unit) (\unit - r) = f(0)(\unit - r) 
$$
Hence $\varphi (a)\xi = f(0)(\unit - r)\xi = f(0)\xi $.  

We shall show that  for any $n\in \mathbb{N}$, $t_i\in [0,\infty)$ and projections 
$p_i \in M$ $(i=1,2,\ldots,n)$ with $\sum_{i=1}^n p_i= \unit$,
\[  \varphi(\sum_{i=1}^n t_ip_i) = \sum_{i=1}^n f(t_i)p_i.  \]
In fact, put $a = \sum_{i=1}^n t_ip_i$ and take any $k=1,2,\ldots,n$ and fix it. 
Put $b = t_k\unit$.  Then $ap_k = p_ka $ and $bp_k = p_kb$. Therefore 
$$
p_k\varphi(a) = \varphi(a) p_k = p_k\varphi(p_kap_k)p_k = p_k\varphi(t_kp_k)p_k
$$
and 
$$
f(t_k)p_k = \varphi(b) p_k= p_k\varphi(p_kbp_k)p_k = p_k\varphi(t_kp_k)p_k.
$$
Hence 
\[  \varphi(\sum_{i=1}^n t_ip_i) = \sum_{i=1}^n f(t_i)p_i.  \]

Next we shall show that, for any {\it invertible} $a \in M^+$ and 
any sequence $(a_n)_n $ 
in $M^+$ with $a_n \leq a$ , if  $\|a_n - a \| \rightarrow 0$, then 
$\|\varphi(a_n) - \varphi(a)\|  \rightarrow 0$.  In fact,  let 
$$
c_n = a^{-1} \# a_n := a^{-1/2}(a^{1/2}a_na^{1/2})^{1/2}a^{-1/2}
$$
be the geometric operator mean of $a^{-1}$ and $ a_n$, see??.   
Then $a_n = c_nac_n$ and  
$$
0 \leq c_n \leq a^{-1/2}(a^{1/2}aa^{1/2})^{1/2}a^{-1/2} = \unit.
$$
Because  $\|a_n - a \| \rightarrow 0$, we have that $\|c_n - \unit \| \rightarrow 0$.  
Since $\varphi$ is monotone and supercongruent, 
$$
0 \leq c_n \varphi(a)c_n \leq \varphi(c_n ac_n) = \varphi(a_n) \leq \varphi(a).
$$
Then  $\varphi(a) - \varphi(a_n) \leq \varphi(a) - c_n \varphi(a)c_n$. Hence 
$$
\| \varphi(a) - \varphi(a_n) \| \leq  \|\varphi(a) - c_n \varphi(a)c_n\| \rightarrow 0. 
$$
We shall also show that, for any {\it invertible} 
$a \in M^+$ and 
any sequence $(b_n)_n $ 
in $M^+$ with $a\leq b_n$ , if  $\|b_n - a \| \rightarrow 0$, then 
$\|\varphi(b_n) - \varphi(a) \|  \rightarrow 0$.  In fact,  let 
$$
d_n = a \# b_n^{-1} := a^{1/2}(a^{-1/2}b_n^{-1}a^{-1/2})^{1/2}a^{1/2}
$$
be the geometric operator mean of $a$ and $b_n^{-1}$.   
Then $a= d_nb_nd_n$ and  
$$
0 \leq d_n \leq  = a^{1/2}(a^{-1/2}a^{-1}a^{-1/2})^{1/2}a^{1/2} =\unit.
$$
Because  $\|b_n - a \| \rightarrow 0$, we have that $\|d_n - \unit \| \rightarrow 0$.  
Since $\varphi$ is monotone and supercongruent, 
$$
0 \leq d_n\varphi(b_n)d_n \leq \varphi (d_n b_nd_n)= \varphi(a), 
$$
and 
$0 \leq \varphi(b_n) \leq d_n^{-1}\varphi(a)d_n^{-1}$.  
Then  
$$\varphi(b_n) - \varphi(a) \leq d_n^{-1}\varphi(a)d_n^{-1} - \varphi(a)
$$
Hence 
$\| \varphi(b_n) - \varphi(a) \| \leq  \| d_n^{-1}\varphi(a)d_n^{-1} - \varphi(a)
 \| \rightarrow 0$.

In particular,  Since $\varphi(t\unit) = f(t)\unit$,  
the function $f$ is continuous on $(0,\infty)$.  
 Moreover 
$f$ is a Borel function on $[0,\infty)$.

For any {\it invertible} elememt $a \in M^+$. we shall show  that $\varphi(a)$ is 
equal to the continuous functional calculus of $a$ by $f|_{(0,\infty)}$ on $(0,\infty)$, that is, $\varphi(a)=f(a)$.  
We may assume that $\sigma(a)\subset [\alpha, \beta]$  for some 
$0 <\alpha \leq \beta$ in $(0,\infty)$.  
For any positive integer $n$, we define a function $g_n$ on $[\alpha, \beta]$ as follows:
\[   g_n(t) = \begin{cases} \alpha, & \alpha \le t \le \alpha + \dfrac{\beta-\alpha}{2^n} \\
              \alpha +(k-1)\dfrac{\beta-\alpha}{2^n}, &  \alpha +(k-1)\dfrac{\beta-\alpha}{2^n}<t 
              \le \alpha +k\dfrac{\beta-\alpha}{2^n} \end{cases} ,\]
where $k=2,3,\ldots,2^n$. 
Put $a_n =g_n(a)$.
Then we have $0 \leq a_n \le a$,  $\sigma(a_n)\subset [\alpha, \beta]$ and 
$\varphi(a_n) =f(a_n)$,  because $a_n$ has a finite spectra.  
Since $\|a_n - a \| \rightarrow 0$,  $\|\varphi(a_n) - \varphi(a)\|  \rightarrow 0$. 
On the otherhand, since $f$ is continuous on $(0,\infty)$ and 
the continuous functional calculus by $f$ on $[\alpha, \beta]$ is norm continuous, 
 $\|f(a_n) - f(a)\|\rightarrow 0$.  Therefore $\varphi(a)=f(a)$.

Because $M$ is an infinite-dimensional factor,  $M$ contains any finite matrix algebra 
$M_n({\mathbb C})$. Hence $f$ is an operator monotone 
continuous function on $(0,\infty)$. 

For possiblly non-invertible  element $a \in M^+$ in general, 
we shall show  that $\varphi(a)$ is equal to the Borel functional calculus of $a$ 
by $f$, that is $\varphi(a)=f(a)$.  This case is a little bit subtle.  
We may assume that $\sigma(a)\subset [0, \beta]$ fo some $\beta \geq 0$.

For any positive integer $n$, we define a function $\tilde{g_n}$ on $[0,\beta]$ as follows:
\[   \tilde{g_n}(t) = \begin{cases} 0, & 0 \le t \le \dfrac{\beta}{2^n} \\
              \dfrac{(k-1)\beta}{2^n}, &  \dfrac{(k-1)\beta}{2^n}<t 
              \le \dfrac{k\beta}{2^n} \end{cases} , \]
where $k=2,3,\ldots,2^n$. 
Put $a_n =\tilde{g_n}(a)$.
If $m\le n$ we have $a_m\le a_n\le a$. Since $\varphi$ is monotone, 
 $\varphi(a_m) \le \varphi(a_n) \le \varphi(a)$.  And 
 $\varphi(a_n) =f(a_n)$,  because $a_n$ has a finite spectra.
Since the sequence $\{ \tilde{g}_{n}\}$ converges the identity map on $[0, \beta]$ with respect to the pointwise convergent topology,  the increasing sequence 
$(a_n )_n = (\{ \tilde{g}_{n}\}(a))_n$  in $M^+$
 converges to $a$ in the strong operator topology. 

We do not know that $\varphi$ is normal  in this moment.  
But, only for this particular sequence $(a_n )_n $, 
 we can show that $\varphi(a_n)$ converges $\varphi(a)$ in the weak operator topology.  
 In fact, 
let $\tilde{h_n}$ be a bounded Borel function on $[0,\beta]$ as follows:
\[   \tilde{h_n}(t) =\begin{cases} 0, & 0\le t \le \dfrac{\beta}{2^n} \\
                      \sqrt{ \dfrac{\tilde{g_n}(t)}{t} } & \dfrac{\beta}{2^n}<t \le \beta \end{cases}.  \] 
Then $0\le \tilde{h}_{n}\le 1$ and $\{\tilde{h}_{n}\}$ poitwise converges to 
$\chi_{(0,\beta]}$.  We set $c_n = \tilde{h_n}(a)$.  
Then  the sequence $(c_{n} )_n$ of positive contractions strongly converges to the 
range projection $r = \chi_{(0,\beta]}(a)$ of $a$ and
$c_{n}ac_{n}= a_{n}$.

For any $\xi \in H$,  put $\xi_1 := r\xi$ and $\xi_2 := (\unit - r)\xi \in {\rm Ker }  a$.   
Since $a_n \leq a$,  ${\rm Ker }  a \subset {\rm Ker }  a_n$ and $\xi_2 \in {\rm Ker }  a_n$. 
Because  $ar = ra$ and $a_nr = ra_n$,  $\varphi(a)r = r\varphi(a)$ and 
$\varphi(a_n)r = r\varphi(a_n)$ .   
Since $\varphi$ is monotone and supercongruent, 
$c_n \varphi(a)c_n \le \varphi(c_nac_n) = \varphi(a_n) \le \varphi(a)$.  
 Thus we have
\[  0\le \varphi(a)-\varphi(a_n) \le  \varphi(a) -c_n\varphi(a)c_n . \]
Then  we have 
\begin{align*}
    0 & \leq <(\varphi(a) - \varphi(a_n))\xi, \xi> \\
    =  & <(\varphi(a) - \varphi(a_n))\xi_1, \xi_1> 
    + <(\varphi(a) - \varphi(a_n))\xi_2, \xi_2>\\
    \leq & <(\varphi(a) - c_n \varphi(a)c_n)\xi_1, \xi_1> 
    + <f(0)\xi_2, \xi_2>- <f(0)\xi_2, \xi_2> \\
    = &  <\varphi(a)\xi_1, \xi_1> - <\varphi(a_n)c_n\xi_1, c_n\xi_1>.  
\end{align*}

Since $c_n\xi_1$ convergents to $r\xi_1 = \xi_1$,  we conclude that 
$\varphi(a_n)$ converges $\varphi(a)$ in the weak operator topology.

We should note that a Borel functional calculus is not normal in general.  But 
we shall show that the  Borel functional calculus  $\varphi_f$ on $M^+$ by the particular function $f$ is normal.  In fact,  define a continuous function 
$F:[0,\infty)\longrightarrow [0,\infty)$ by 
\[   F(t) = f(t) - \lim_{t\to 0+} f(t).  \]
Then $F$ is operator monotone on $[0,\infty)$.  In fact, 
for $0\le a \le b$ and any $\epsilon>0$, $F(a+\epsilon \unit)\le F(b+\epsilon \unit)$ 
because $f$ is operator monotone on $(0,\infty)$. 
By the continuity of $F$, we can get $F(a)\le F(b)$ by making $\epsilon$ tend to $0$. 
Thus $F$ is operator monotone function on $[0,\infty)$ with $F(0)=0$. 
The functional calculus $\varphi_F$ by the continuous function $F$  is normal.
The function $f$ is decomposed into
\[   f(t) = F(t) + k \chi_{(0,\infty)}(t), \qquad k =\lim_{t\to 0+}f(t)-f(0)\ge 0. \]
Then the Borel functional calculus $\varphi_f$ of $a\in \M^+$ by $f$ has the form:
\[   \varphi_f(a) = \varphi_F(a) + k \varphi_{(0,\infty)}(a),  \]
where $\varphi_{(0,\infty)}(a)$ is the Borel functional calculus of $a$ 
by $\chi_{(0,\infty)}$ and 
in fact the range projection of $a$.  Hence $\varphi_{(0,\infty)}$ is normal by Propositon \ref{rangeprojection}.  
Therefore the Borel functional calculus $\varphi_f$  by $f$ is normal.

Finally, since $\varphi(a_n)$ converges $\varphi(a)$  and 
$f(a_n)$ converges $f(a)$ in the weak operator topology and 
$\varphi(a_n) =f(a_n)$, we conclude that $\varphi(a) = f(a)=\varphi_f(a)$, the 
Borel functional calculus of $a$ by $f$. \\
(2) $\Rightarrow$ (1): Suppose that 
there exists a Borel function $f:[0,\infty) \longrightarrow [0,\infty)$ such that 
$f$ is continuous on $(0,\infty)$ , operator monotone on $(0,\infty)$ with
\[   f(0) \le \lim_{t\to 0+}f(t),   \]
and  $\varphi(a)$ is equal to the Borel functional calculus 
$f(a)= \varphi_f(a)$ of $a$ by $f$ 
for any $a\in M^+$. 
We define a continuous function $F:[0,\infty)\longrightarrow [0,\infty)$ by 
\[   F(t) = f(t) - \lim_{t\to 0+} f(t).  \]
Then as in the preceding discussion, $F$ is operator monotone on $[0,\infty)$ 
with $F(0)=0$.  Hence 
the continuous functional calculus $\varphi_F$ is supercongruent as in Example \ref{monotonesupercongruent}.
The function $f$ is decomposed into
\[   f(t) = F(t) + k \chi_{(0,\infty)}(t), \qquad k =\lim_{t\to 0+}f(t)-f(0)\ge 0, \]
and 
\[   \varphi_f(a) = \varphi_F(a) + k \varphi_{(0,\infty)}(a),  \]
where $\varphi_{(0,\infty)}(a)$ is the range projection of $a$ and 
$\varphi_{(0,\infty)}$ is supercongruent.  Hence 
$\varphi$ is monotone and supercongruent. 

\end{proof}

By the above theorem, the restricted norm continuity and the normality of the non-linear 
positive map are satisfied automatically without assuming  them apriori. 
%
%
\begin{corollary} Let $M$ be a infinite-dimensional factor and 
$\varphi : M^+ \longrightarrow M^+$ be a non-linear positive map. 
If $\varphi$ is monotone and supercongruent,
then $\varphi$ is normal on $M^+
$ and 
$\varphi$ is norm continuous on the set of positive invertible elements 
$(M^+)^{-1}$.  Moreover $\varphi$ is concave. 
\end{corollary}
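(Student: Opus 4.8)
The plan is to derive all three assertions directly from Theorem~\ref{Borelcharacterization}. Since $\varphi$ is monotone and supercongruent, that theorem provides a Borel function $f:[0,\infty)\to[0,\infty)$ which is continuous and operator monotone on $(0,\infty)$, satisfies $f(0)\le\lim_{t\to0+}f(t)$, and for which $\varphi(a)=\varphi_f(a)=f(a)$ for every $a\in M^+$. I would then reuse the decomposition employed in the proof of that theorem: setting
\[
F(t)=f(t)-\lim_{s\to0+}f(s),\qquad k=\lim_{s\to0+}f(s)-f(0)\ge0,
\]
one has, up to a harmless additive nonnegative scalar, $f=F+k\,\chi_{(0,\infty)}$ with $F$ an operator monotone continuous function on $[0,\infty)$ satisfying $F(0)=0$, and correspondingly
\[
\varphi_f(a)=\varphi_F(a)+k\,\varphi_{(0,\infty)}(a),
\]
where $\varphi_{(0,\infty)}(a)$ is the range projection of $a$.

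For normality, I would note that $\varphi_F$ is normal by Example~\ref{monotonesupercongruent}, that $\varphi_{(0,\infty)}$ is normal by Proposition~\ref{rangeprojection}, and that a nonnegative combination of normal monotone positive maps is again normal; this is in any case already contained in the proof of Theorem~\ref{Borelcharacterization}. For norm continuity on $(M^+)^{-1}$, suppose $a\in(M^+)^{-1}$ and $\|a_n-a\|\to0$; then for all large $n$ the spectra $\sigma(a_n)$ stay inside a fixed compact interval $[\alpha,\beta]\subset(0,\infty)$, and since $f$ is continuous on $[\alpha,\beta]$ it is a uniform limit there of polynomials, for which the functional calculus is norm continuous, so $\|\varphi(a_n)-\varphi(a)\|=\|f(a_n)-f(a)\|\to0$.

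The only step that is not mere bookkeeping is concavity. Here I would invoke L\"owner's theorem that an operator monotone function mapping $[0,\infty)$ into $[0,\infty)$ is operator concave (see \cite[Ch.~V]{bhatia1}); applied to $F$, and using that $M$ contains all finite matrix algebras together with norm continuity of the continuous functional calculus, this yields that $\varphi_F$ is concave on $M^+$. For the range-projection map I would argue directly: for $a,b\in M^+$ and $t\in(0,1)$, a vector $\xi$ with $(ta+(1-t)b)\xi=0$ satisfies $t\langle a\xi,\xi\rangle+(1-t)\langle b\xi,\xi\rangle=0$ and hence $a\xi=b\xi=0$, so $\mathrm{Ker}(ta+(1-t)b)=\mathrm{Ker}(a)\cap\mathrm{Ker}(b)$ and therefore $\varphi_{(0,\infty)}(ta+(1-t)b)=\varphi_{(0,\infty)}(a)\vee\varphi_{(0,\infty)}(b)\ge t\,\varphi_{(0,\infty)}(a)+(1-t)\,\varphi_{(0,\infty)}(b)$, each summand being dominated by the join. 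Since $k\ge0$, the sum $\varphi_f=\varphi_F+k\,\varphi_{(0,\infty)}$ is then concave, which completes the proof. The main obstacle, such as it is, is getting the decomposition of $f$ exactly right and citing L\"owner's theorem in the precise form needed; everything else is an immediate consequence of Theorem~\ref{Borelcharacterization}.
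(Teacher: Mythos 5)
Your proposal is correct, and it follows the same overall strategy as the paper's proof: invoke Theorem \ref{Borelcharacterization} to write $\varphi=\varphi_f$, split $f$ into an operator monotone continuous part $F$ plus $k\,\chi_{(0,\infty)}$ (plus, as you rightly note, the additive constant $f(0)$, which the paper's displayed decomposition silently omits), and treat the two summands separately, getting normality of $\varphi_F$ from Example \ref{monotonesupercongruent} and of the range-projection part from Proposition \ref{rangeprojection}. The differences are local but genuine. For normality and norm continuity on $(M^+)^{-1}$ the paper merely points back to the arguments inside the proof of Theorem \ref{Borelcharacterization}, whereas you rederive norm continuity from the representation $\varphi(a)=f(a)$ by the standard polynomial-approximation argument on a compact interval $[\alpha,\beta]\subset(0,\infty)$; both routes work, and yours is more self-contained. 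For concavity of the singular summand, the paper argues that $\chi_{(0,\infty)}$ is operator concave because it is the pointwise limit of the operator concave functions $t^{1/n}$, while you prove directly that $\mathrm{Ker}(ta+(1-t)b)=\mathrm{Ker}(a)\cap\mathrm{Ker}(b)$, so the range projection of a convex combination is $r(a)\vee r(b)\ge t\,r(a)+(1-t)\,r(b)$; your argument is more elementary and avoids any limiting procedure in the Borel calculus, though it is special to this particular function, whereas the paper's limit argument would apply to any pointwise limit of operator concave functions. For the regular summand both proofs rest on the same standard fact (see Chapter V of \cite{bhatia1}) that an operator monotone function of $[0,\infty)$ into itself is operator concave, and your passage from the matrix-level statement to $\varphi_F$ being concave on $M^+$ is stated a bit loosely but is at the same level of rigor as the paper itself.
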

\begin{proof}
The almost all except concavity are proved in the discussion  of the proof in the 
theorem above.  Since $f_n(t) = t^{1/n}$ is a operator concave function on 
$[0,\infty)$ and $\chi_{(0,\infty)}(t) =  \lim_{n \to \infty} f_n(t)$,  
$\chi_{(0,\infty)}$ is also operator concave function.  Since $F$ is 
operator monotone on $[0,\infty)$, $F$ is also operator concave. 
Therfore $\varphi$ is concave. 
\end{proof}

In the above Theorem, if we weaken the supercongruent condition as 
only for 
positive {\it invertible} contraction $c \in M^+$  and $a \in M^+$
$$
c\varphi(a)c \le \varphi(cac), 
$$
then the conclusion of the Theorem above does not hold in general.  
In fact, let the function $f_\alpha$ $(\alpha\ge 0)$ be operator monotone on $[0,\infty)$ and increasing for $\alpha$, that is
\begin{gather*}
  a, b \in M^+ \text{ with } a\le b \Rightarrow f_{\alpha}(a)\le f_{\alpha}(b) \\
  \text{ and } \alpha \le \beta \Rightarrow f_{\alpha}(t) \le f_{\beta}(a) \quad (t\in[0,\infty)),  \tag{*}
\end{gather*}
for any factor $M$.
For an example, it is well-known 
\[   f(t) = \alpha -\frac{1}{t+1} \quad (\alpha\ge 0) \]
is operator monotone for $[0,\infty)$ (\cite{bhatia1}, \cite{bhatia2}, \cite{hiaipetz}).
So the function
\[   f_{\alpha}(t) = \frac{\alpha}{\alpha+1}-\frac{1}{t+1} \quad (\alpha\ge 0)  \]
satisfies the condition (*).

\begin{proposition}
We assume that $M = B(H)$ for a separable Hilbert space $H$ 
and the operator monotone function $f_{\alpha}$ on $[0,\infty)$ with the property (*) and
\[   f_{\infty}(t) = \lim_{\alpha \to \infty}f_{\alpha}(t) <\infty  \]
exists for all $t\in[0,\infty)$.
We define the map $\varphi : M^+\longrightarrow M^+$ as follows:
\[   \varphi(a) = f_{{\rm rank}(a)}(a)  \qquad a \in M^+,  \]
where ${\rm rank}(a) = \dim($the closure of $a\hilb)$.
Then we have the following. 
\begin{enumerate}
  \item[$(1)$] $a, b \in M^+$ $\Rightarrow$ $\varphi(a)\le \varphi(b)$.
  \item[$(2)$] For any invertible $c\in M$, $c^{*}\varphi(a)c \le \varphi (c^{*}ac)$   $(a\in M^+)$. 
  \item[$(3)$] If $f_m\neq f_n$ for some $m,n\in \mathbb{N}$, then $\varphi$ is not given as the continuous function calculus. 
\end{enumerate}
\end{proposition}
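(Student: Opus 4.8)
The plan is to establish the three assertions in turn; the unifying point is that the rank functional $\mathrm{rank}(a)=\dim\overline{aH}$, which selects the member $f_{\mathrm{rank}(a)}$ of the family, must be controlled. For (1), suppose $0\le a\le b$ in $M^+$. First $\mathrm{rank}(a)\le\mathrm{rank}(b)$: if $b\xi=0$ then $0\le\langle a\xi,\xi\rangle\le\langle b\xi,\xi\rangle=0$, so $\mathrm{Ker}(b)\subseteq\mathrm{Ker}(a)$ and hence $\overline{aH}=\mathrm{Ker}(a)^{\perp}\subseteq\mathrm{Ker}(b)^{\perp}=\overline{bH}$. By property (*) the map $\alpha\mapsto f_\alpha$ is increasing (and this extends to $\alpha=\infty$ since $f_\infty=\sup_\alpha f_\alpha$), so $f_{\mathrm{rank}(a)}\le f_{\mathrm{rank}(b)}$ pointwise, whence $f_{\mathrm{rank}(a)}(b)\le f_{\mathrm{rank}(b)}(b)$ by order-preservation of functional calculus in the function. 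Since $f_{\mathrm{rank}(a)}$ is operator monotone (an increasing pointwise limit of operator monotone functions with finite values is again operator monotone) and $a\le b$, also $f_{\mathrm{rank}(a)}(a)\le f_{\mathrm{rank}(a)}(b)$. Chaining the two inequalities gives $\varphi(a)=f_{\mathrm{rank}(a)}(a)\le f_{\mathrm{rank}(a)}(b)\le f_{\mathrm{rank}(b)}(b)=\varphi(b)$.

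For (2), the crux is that conjugation by an invertible contraction $c$ preserves rank. Since $cH=H$ we have $\mathrm{Ran}(c^{*}ac)=c^{*}a(cH)=c^{*}(aH)=c^{*}\mathrm{Ran}(a)$, and because $c^{*}$ is a homeomorphism of $H$ it carries $\overline{\mathrm{Ran}(a)}$ onto $\overline{\mathrm{Ran}(c^{*}ac)}$; hence $\mathrm{rank}(c^{*}ac)=\mathrm{rank}(a)=:r$. Then $\varphi(a)=f_r(a)$ and $\varphi(c^{*}ac)=f_r(c^{*}ac)$, and for $\|c\|\le1$ the required inequality $c^{*}f_r(a)c\le f_r(c^{*}ac)$ is precisely the conclusion of the Lemma above, applied to the operator monotone continuous function $f_r$ on $J=[0,\infty)$ (with $f_r(0)\ge0$). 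For the case $r=\infty$ I would instead take strong-operator limits in $\alpha$ of the inequalities $c^{*}f_\alpha(a)c\le f_\alpha(c^{*}ac)$, which hold for each finite $\alpha$ by the Lemma, using that $f_\alpha\uparrow f_\infty$ pointwise forces $f_\alpha(x)\uparrow f_\infty(x)$ strongly for every $x\in M^+$.

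For (3), assume for contradiction that $\varphi(a)=h(a)$ for every $a\in M^+$, with $h:[0,\infty)\to[0,\infty)$ continuous. Since $H$ is infinite-dimensional, for each $k\in\mathbb{N}$ there is a projection $p_k\in M$ of rank exactly $k$, necessarily $p_k\ne\unit$, so $\sigma(p_k)=\{0,1\}$. For $t>0$ the operator $tp_k$ has rank $k$ and spectrum $\{0,t\}$, hence $\varphi(tp_k)=f_k(tp_k)=f_k(t)p_k+f_k(0)(\unit-p_k)$; comparing this with $h(tp_k)=h(t)p_k+h(0)(\unit-p_k)$ (multiply by $p_k$ and by $\unit-p_k$) forces $h(t)=f_k(t)$ and $h(0)=f_k(0)$. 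Letting $t$ range over $(0,\infty)$ yields $h=f_k$ on $[0,\infty)$, and applying this with $k=m$ and $k=n$ gives $f_m=f_n$, contradicting the hypothesis.

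I expect the main obstacle to be the rank bookkeeping in (2): verifying $\mathrm{rank}(c^{*}ac)=\mathrm{rank}(a)$ for invertible $c$ requires some care with closures of ranges in infinite dimensions, and the case $\mathrm{rank}(a)=\infty$ needs the separate limiting argument indicated. Once these are in place, (2) reduces to the Lemma, and (1) and (3) follow routinely from the rank comparison and from property (*).
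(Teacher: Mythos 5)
Your proof is correct and follows essentially the same route as the paper: rank comparison plus operator monotonicity for (1), invariance of $\mathrm{rank}$ under conjugation by an invertible element combined with the Hansen-type Lemma for (2) (reading the invertible $c$ as a contraction, as the surrounding discussion intends), and evaluation on scalar multiples of finite-rank projections for (3). In places you are more careful than the paper's own argument---verifying the rank bookkeeping, treating the $\mathrm{rank}(a)=\infty$ case by a limiting argument, and pinning the putative function down as $f_k$ for two finite ranks rather than comparing $f_m$ with $f_\infty$ via $\varphi(t\unit)$---but these are refinements of the same method, not a different one.
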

\begin{proof}
(1) Since $a\le b$, ${\rm rank}a \le {\rm rank} b$. So we have
\[  \varphi(a) = f_{{\rm rank}(a)}(a) \le f_{{\rm rank}(a)}(b) \le f_{{\rm rank}(b)}(b) = \varphi(b).  \]

(2) Since the mapping $f_{{\rm rank}(a)}$ is operator monotone on $[0,\infty)$, we have
\[  c^*f_{{\rm rank}(a)}(a)c = f_{{\rm rank}(a)}(c^*ac) , \]
using the approximation of polynomials for $f_{{\rm rank}(a)}$.
By the invertibility of $c$, we have ${\rm rank}(c^*ac) ={\rm rank}(a)$ and
\[  c^* \varphi(a) c = \varphi(c^*ac).  \]

(3) By definiton, we have $\varphi(t\unit) = f_{\infty}(t) \unit$ for any $t\in [0,\infty)$.
We assume $m<n$ and $f_m(t_0)<f_n(t_0)$ for some $t_0\in (0,\infty)$.
For a projection $p\in M$ with ${\rm rank}(p) = m$, we have
\[   \varphi(t_0 p) = f_m(t_0 p) <f_\infty(t_0 p) .  \]
\end{proof}


Finally we shall discuss  the ambiguity of operator means for non-invertible 
positive operators related with our Theorem , if we do {\it not} assume the 
upper semi-continuity for operator means.  We follow the original paper 
of Kubo-Ando \cite {kuboando} , see also  \cite{bhatia2}, \cite{hiaipetz}.

\begin{corollary}
Let $M$ be an infinite-dimensional factor.
If the mapping  
\[ \sigma: M^+\times M^+\ni (a,b) \mapsto a\sigma b \in M^+\]
satisfies the following conditions:
\begin{enumerate}
  \item[$(1)$] $a\le c$ and $b \le d$ imply $a \sigma b \le c\sigma d$.
  \item[$(2)$] For any $c\in M^+$, $c(a\sigma b)c \le (cac)\sigma (cbc)$.
\end{enumerate}
then there exist non-negative real valued, increasing, continuous functions $f$ and $g$ on $(0,\infty)$ such that
\begin{align*}
    a\sigma b & = b^{1/2}f(b^{-1/2}ab^{-1/2})b^{1/2} \\
                   & = a^{1/2}g(A^{-1/2}ba^{-1/2})a^{1/2}  
\end{align*}
for any positive invertible operators $a,b \in M^+$.
But we do not know how to represent $a\sigma b$ for 
positive non-invertible operators $a,b \in M^+$.
\end{corollary}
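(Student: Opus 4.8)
The plan is to reduce the two-variable statement to the single-variable Theorem \ref{Borelcharacterization} by freezing one argument of $\sigma$ at $\unit$. First I would upgrade the transformer inequality (2) to an equality for invertible congruences. Given an invertible $c\in M^+$, condition (2) reads $c(a\sigma b)c \le (cac)\sigma(cbc)$; applying (2) once more with $c^{-1}\in M^+$ in place of $c$ and with $cac$, $cbc$ in place of $a$, $b$ gives $c^{-1}\bigl((cac)\sigma(cbc)\bigr)c^{-1}\le a\sigma b$, and multiplying by $c$ on both sides yields the reverse inequality. Hence
\[
  c(a\sigma b)c = (cac)\sigma(cbc)
\]
for every invertible $c\in M^+$ and all $a,b\in M^+$.

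Next, define $\varphi:M^+\to M^+$ by $\varphi(a)=a\sigma\unit$. By (1) with $b=d=\unit$, $\varphi$ is monotone. For a positive contraction $c$, (2) gives $c\varphi(a)c\le (cac)\sigma(c^2)$, and since $0\le c\le\unit$ forces $c^2\le\unit$, monotonicity of $\sigma$ in the second slot (again (1)) gives $(cac)\sigma(c^2)\le (cac)\sigma\unit=\varphi(cac)$; thus $\varphi$ is supercongruent. Theorem \ref{Borelcharacterization} now produces a Borel function $f:[0,\infty)\to[0,\infty)$, continuous and operator monotone on $(0,\infty)$ with $f(0)\le\lim_{t\to0+}f(t)$, such that $\varphi(a)=f(a)$ for all $a\in M^+$; in particular $f$ is non-negative, increasing and continuous on $(0,\infty)$. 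For invertible positive $a,b$, applying the transformer equality with $c=b^{1/2}$ to the pair $b^{-1/2}ab^{-1/2}$, $\unit$ gives
\[
  a\sigma b = b^{1/2}\bigl((b^{-1/2}ab^{-1/2})\sigma\unit\bigr)b^{1/2} = b^{1/2} f(b^{-1/2}ab^{-1/2}) b^{1/2},
\]
which is the first representation. Symmetrically, $\psi(b)=\unit\sigma b$ is monotone and supercongruent (using (1) with $a=c=\unit$ and the same $c^2\le\unit$ observation in (2) with $a=\unit$), so Theorem \ref{Borelcharacterization} yields an increasing continuous $g$ on $(0,\infty)$ with $\psi(b)=g(b)$, and the transformer equality with $c=a^{1/2}$ gives $a\sigma b = a^{1/2} g(a^{-1/2}ba^{-1/2}) a^{1/2}$.

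The steps needing genuine care are the two-sided use of (2) that turns the transformer inequality into an equality for invertible congruences, and the verification that $\varphi$ and $\psi$ satisfy the hypotheses of Theorem \ref{Borelcharacterization} (in particular that a positive contraction $c$ has $c^2\le\unit$, so that joint monotonicity absorbs the extra $c^2$). The genuine obstacle, rather than a step, is the extension to non-invertible $a,b$: the reduction above inverts $a^{1/2}$ or $b^{1/2}$ and so breaks down there, which is consistent with the concluding remark that the mean is not determined on non-invertible elements.
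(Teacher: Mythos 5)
Your proposal is correct and follows essentially the same route as the paper: freeze the second argument to define $\varphi(a)=a\sigma\unit$ (and $\psi(b)=\unit\sigma b$), check monotonicity and supercongruence exactly as you do (using $c^2\le\unit$ and joint monotonicity), invoke Theorem \ref{Borelcharacterization}, and recover $a\sigma b$ for invertible $a,b$ by congruence with $b^{\pm 1/2}$. The paper obtains the transformer equality by the same two-sided application of condition (2) (with $b^{-1/2}$ and then $b^{1/2}$), only written as a single chain of inequalities rather than stated as a separate lemma.
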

\begin{proof}
We define the mapping $\varphi: M^+ \longrightarrow M^+$ as follows:
\[   \varphi(a) = a\sigma \unit  \qquad (a\in M^+).  \]
It is clear that
\[   a\le b \; \Rightarrow \varphi(a)=a\sigma \unit \le b\sigma \unit = \varphi(b) \]
and for any contraction $c\in M^+$,
\[   cf(a)c = c(a\sigma \unit)c \le (cac)\sigma (c^2) \le (cac)\sigma \unit = f(cac). \]
By Theorem \ref{Borelcharacterization}, we can get the desired function $f$ and the relation
\[   f(a) = a\sigma \unit  \qquad (a\in (M^+)^{-1}).  \]

For any positive invertible operators $a, b\in M^+$, 
we have 
$$
a\sigma b = b^{1/2}f(b^{-1/2}ab^{-1/2})b^{1/2}
$$
as usual way:
\begin{align*}
   a \sigma b & = b^{1/2}b^{-1/2}(a\sigma b b^{-1/2} b^{1/2}  \\
      &  \le b^{1/2}( (b^{-1/2}ab^{-1/2}) \sigma \unit )b^{1/2} = b^{1/2}f(b^{-1/2}ab^{-1/2})b^{1/2} \\
      &  \le a\sigma b.
\end{align*}

We also define $\psi: M^+ \longrightarrow M^*$ as follows:
\[   \psi(a) = \unit \sigma a  \qquad (a\in M^+).  \]
Then we can get the function $g$ satisfying
\[   a\sigma b = a^{1/2}g(a^{-1/2}ba^{-1/2})a^{1/2} \quad \text{ for any } 
a,b \in  (M^+)^{-1}).  \]
\end{proof}

\begin{remark}  \rm
We do not know how to represent $a\sigma b$  for 
positive non-invertible operators $a,b \in M^+$.  
Based on the theory of Grassmann manifolds, Bonnabel-Sepulchre  \cite{B-S} 
and Batzies-H\''{u}per-Machado-Leite \cite{B-H-M-L} introduced  the 
geometric mean for positive semidefinite matrices or projections of fixed rank. 
Fujii \cite{F} extends it to a general theory of means of positive semideinite 
matrices of fixed rank. 
\end{remark}

\section{Non-additive measures and non-linear monotone positive maps}
In this section we begin to study non-linear monotone positive maps related 
with non-additive measures.  A non-additive measure is also called capacity, 
fuzzy measure, submeasure, monotone measure, etc. in different fields.  
Non-additive measures were firstly studied  by Choquet \cite{Ch} and 
Sugeno \cite{Su} . 
They proposed Choquet integral and Sugeno integral with respect to 
monotone measures.  

\begin{definition} \rm
 Let $\Omega$ be a set and ${\mathcal B}$ a $\sigma$-field on 
$\Omega$. A function $\mu: {\mathcal B} \rightarrow [0, \infty]$ is  called a 
monotone measure if $\mu$ satisfies 
\begin{enumerate}
\item[$(1)$]  $\mu(\emptyset) = 0$, and 
\item[$(2)$] For any $A,B \in {\mathcal B}$, if $A \subset B$, 
then $\mu(A) \leq \mu(B)$. 
\end{enumerate}
\end{definition}

We recall  the discrete Choquet integral with respect to a monotone measure on a finite set 
$\Omega  = \{1,2, \dots, n\}$.  Let ${\mathcal B} = P(\Omega)$ be the set of all 
subsets of $\Omega$ and $\mu:  {\mathcal B} \rightarrow [0, \infty)$ be a finite 
monotone measure . 
\begin{definition} \rm 
The  discrete Choquet integral  of $f = (x_1, x_2,\dots, x_n) \in [0,\infty)^n$ 
with respect to a monotone measure $\mu$ on a finite set 
$\Omega  = \{1,2, \dots, n\}$ is defined as follows:  
$$
(C)\int f d\mu = \sum_{i=1}^{n-1} (x_{\sigma(i) }- x_{\sigma(i+1)})\mu(A_i) 
+ x_{\sigma(n) }\mu(A_n) , 
$$
where $\sigma$ is a permutaion on $\Omega$ such that  
$x_{\sigma(1)} \geq x_{\sigma(2)} \geq  \dots \geq x_{\sigma(n)}$ 
, $A_i = \{\sigma(1),\sigma(2),\dots,\sigma(i)\}$.  Here we should note that 
$$
f = \sum_{i=1}^{n-1} (x_{\sigma(i) }- x_{\sigma(i+1)})\chi_{A_i} 
+ x_{\sigma(n) }\chi_{A_n}
$$
\end{definition}
Let $A = {\mathbb C}^n$ and define 
$(C-\varphi)_{\mu} :( {\mathbb C}^n)^+ \rightarrow {\mathbb C}^+$ 
by the  Choquet integral $(C-\varphi)_{\mu}(f) = (C)\int f d\mu$.  Then 
$(C-\varphi)_{\mu}$ is a non-linear monotone positive map such that 
$(C-\varphi)_{\mu}(\alpha f) = \alpha(C- \varphi)_{\mu}(f)$ for a positive 
scalar $\alpha$.

We shall consider a matrix version of the discrete Choquet integral. 
\begin{proposition} Let $\mu:  {\mathcal B} \rightarrow [0, \infty)$ be a finite 
monotone measure on a finite set 
$\Omega  = \{1,2, \dots, n\}$  with ${\mathcal B} = P(\Omega)$. 
Let $A = M_n({\mathbb C})$ and define 
$(C-\varphi)_{\mu} : (M_n({\mathbb C}))^+ \rightarrow  {\mathbb C}^+$ 
as follows:  For $a \in (M_n({\mathbb C}))^+$, let 
$\lambda(a) = (\lambda_1(a),\lambda_2(a),\dots, \lambda_n(a))$ be the list of the 
eigenvalues of $a$ in decreasing order :
$\lambda_1(a), \geq \lambda_2(a) \geq \dots \geq \lambda_n(a)$ with 
counting multiplicities.  Let
$$
(C-\varphi)_{\mu}(a) = \sum_{i=1} ^{n-1} ( \lambda_i(a)- \lambda_{i+1}(a))\mu(A_i) 
+ \lambda_n (a) \mu(A_n) , 
$$
where $A_i = \{1,2,\dots,i \}$.
Then $(C-\varphi)_{\mu}$ is a unitary invariant  non-linear monotone positive map such that 
$(C-\varphi)_{\mu}(\alpha a) = \alpha (C-\varphi)_{\mu}(a)$ for a positive 
scalar $\alpha$. 
\end{proposition}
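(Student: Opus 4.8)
The plan is to deduce every asserted property from elementary facts about the eigenvalue list of a positive matrix, so that the only genuine work is the monotonicity. Positivity, unitary invariance and positive homogeneity are essentially free: for $a \in (M_n(\mathbb{C}))^+$ all $\lambda_i(a) \ge 0$ and the differences $\lambda_i(a) - \lambda_{i+1}(a)$ are $\ge 0$ by the chosen decreasing ordering, while $\mu(A_i) \ge 0$, so $(C-\varphi)_{\mu}(a) \ge 0$; for a unitary $u \in M_n(\mathbb{C})$ the matrix $u^*au$ has the same eigenvalue list as $a$, whence $(C-\varphi)_{\mu}(u^*au) = (C-\varphi)_{\mu}(a)$; and $\lambda_i(\alpha a) = \alpha\lambda_i(a)$ for $\alpha \ge 0$ gives $(C-\varphi)_{\mu}(\alpha a) = \alpha(C-\varphi)_{\mu}(a)$.

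For monotonicity I would first rewrite the defining sum by summation by parts. Put $\mu_i := \mu(A_i)$ with $A_i = \{1,\dots,i\}$ and $\mu_0 := 0$; since $A_1 \subset A_2 \subset \dots \subset A_n$, monotonicity of $\mu$ gives $0 = \mu_0 \le \mu_1 \le \dots \le \mu_n$. A short Abel-summation computation then shows, for every $a \in (M_n(\mathbb{C}))^+$,
\[
(C-\varphi)_{\mu}(a) = \sum_{i=1}^{n-1}(\lambda_i(a)-\lambda_{i+1}(a))\mu_i + \lambda_n(a)\mu_n = \sum_{i=1}^{n}\lambda_i(a)(\mu_i-\mu_{i-1}),
\]
so $(C-\varphi)_{\mu}(a)$ is a nonnegative linear combination, with weights $\mu_i - \mu_{i-1} \ge 0$ that do not depend on $a$, of the eigenvalues of $a$ taken in decreasing order.

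Finally I would invoke Weyl's monotonicity principle: if $0 \le a \le b$ in $M_n(\mathbb{C})$ then $\lambda_i(a) \le \lambda_i(b)$ for each $i$ (immediate from the Courant--Fischer min--max characterization, see \cite{bhatia1}). Combined with the displayed identity and the nonnegativity of the weights this yields
\[
(C-\varphi)_{\mu}(a) = \sum_{i=1}^{n}\lambda_i(a)(\mu_i-\mu_{i-1}) \le \sum_{i=1}^{n}\lambda_i(b)(\mu_i-\mu_{i-1}) = (C-\varphi)_{\mu}(b),
\]
i.e.\ $(C-\varphi)_{\mu}$ is monotone. The one place where care is needed is that the eigenvalue comparison must be applied \emph{after} the Abel-summation rewriting: a term-by-term estimate of the original sum fails, because the differences $\lambda_i(a) - \lambda_{i+1}(a)$ are not monotone functions of $a$, whereas the individual eigenvalues $\lambda_i(a)$ are. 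I expect this reorganization to be the only real obstacle; everything else is bookkeeping.
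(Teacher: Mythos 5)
Your proof is correct and follows essentially the same route as the paper: both rewrite the Choquet sum by Abel summation into $\sum_i \lambda_i(a)\bigl(\mu(A_i)-\mu(A_{i-1})\bigr)$ with nonnegative weights and then apply the min--max (Weyl) monotonicity of eigenvalues term by term, with positivity, unitary invariance and homogeneity read off directly from the eigenvalue list. No substantive difference to report.
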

\begin{proof} For $a, b  \in (M_n({\mathbb C}))^+$, suppose  that $0 \leq a \leq b$. 
By the mini-max  principle for eigenvalues, we have that 
$\lambda_i(a) \leq \lambda_i(b)$ for $i = 1,2,\dots,n$.  
\begin{align*}
(C-\varphi)_{\mu}(a) & = \sum_{i=1} ^{n-1} ( \lambda_i (a) - \lambda_{i+1} (a))\mu(A_i) 
+ \lambda_n (a) \mu(A_n)  \\
      & = \sum_{i=2} ^n  \lambda_i (a)(\mu(A_i) - \mu(A_{i-1}) ) +
        \lambda_1 (a)(\mu(A_1) \\
  & \leq \sum_{i=2} ^n  \lambda_i (b)(\mu(A_i) - \mu(A_{i-1}) ) + 
  \lambda_1 (b)\mu(A_1) \\
  &= (C-\varphi)_{\mu} (b)
\end{align*}
since $\mu$ is a monotone measure. Thus $\varphi_{\mu}$ is monotone. It is clear 
that $\varphi_{\mu} (\alpha a) = \alpha \varphi_{\mu} (a)$ for a positive 
scalar $\alpha$ by the definiton and $\varphi_{\mu}$ is a unitary invariant.
\end{proof}　
Furthermore we can replace a monotone measure on a finite set 
$\Omega  = \{1,2, \dots, n\}$ by a positive operator-valued monotone measure  
$\mu:  {\mathcal B} \rightarrow B(H)^+$ for some Hilbert space $H$ , that is, 
\begin{enumerate}
\item[$(1)$]  $\mu(\emptyset) = 0$, and 
\item[$(2)$] For any $X,Y \in {\mathcal B} = P(\Omega)$, if $X \subset Y$, 
then $\mu(X) \leq \mu(Y)$. 
\end{enumerate}
We have a similar result as follows: 
\begin{proposition}  Let $H$ be a Hilbert space , 
$\mu:  {\mathcal B} \rightarrow B(H)^+$ be a 
positive operator-valued  monotone measure on a finite set 
$\Omega  = \{1,2, \dots, n\}$  with ${\mathcal B} = P(\Omega)$. 
Define 
$(C-\varphi)_{\mu} : (M_n({\mathbb C}))^+ \rightarrow  B(H)^+$ 
as follows:  For $a \in (M_n({\mathbb C}))^+$, let 
$\lambda(a) = (\lambda_1(a),\lambda_2(a),\dots, \lambda_n(a))$ be the list of the 
eigenvalues of $a$ in decreasing order with counting multiplicities.  Let
$$
(C-\varphi)_{\mu}(a) = \sum_{i=1} ^{ n-1} ( \lambda_i(a)- \lambda_{i+1}(a))\mu(A_i) 
+ \lambda_n(a) \mu(A_n) , 
$$
where $A_i = \{1,2,\dots,i\}$.
Then $(C-\varphi)_{\mu} $ is a a unitary invariant non-linear 
monotone positive map such that 
$(C-\varphi)_{\mu}(\alpha a) = \alpha (C-\varphi)_{\mu}(a)$ for a positive 
scalar $\alpha$. 
\end{proposition}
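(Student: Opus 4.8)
The plan is to follow the template of the preceding proposition verbatim, since the only change is that the target algebra is $B(H)$ rather than $\C$ and the measure is operator-valued. First I would verify monotonicity: for $a,b\in (M_n(\C))^+$ with $0\le a\le b$, the min-max principle for eigenvalues gives $\lambda_i(a)\le\lambda_i(b)$ for all $i$. Rewriting the defining sum by Abel summation (as in the scalar proof),
\[
(C\text{-}\varphi)_{\mu}(a) = \sum_{i=2}^n \lambda_i(a)\bigl(\mu(A_i)-\mu(A_{i-1})\bigr) + \lambda_1(a)\mu(A_1),
\]
each coefficient $\mu(A_i)-\mu(A_{i-1})$ is a positive operator because $A_{i-1}\subset A_i$ and $\mu$ is a positive operator-valued monotone measure; also $\mu(A_1)\ge 0$. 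Hence replacing $\lambda_i(a)$ by the larger scalar $\lambda_i(b)$ multiplies each nonnegative operator coefficient by a larger scalar, so the sum increases in the order of $B(H)$, giving $(C\text{-}\varphi)_{\mu}(a)\le (C\text{-}\varphi)_{\mu}(b)$.

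Next I would check positivity of the map itself: each $\lambda_i(a)-\lambda_{i+1}(a)\ge 0$ by the decreasing ordering, $\lambda_n(a)\ge 0$ since $a\ge 0$, and each $\mu(A_i)\ge 0$, so $(C\text{-}\varphi)_{\mu}(a)$ is a sum of nonnegative scalars times positive operators, hence lies in $B(H)^+$. Positive homogeneity is immediate from $\lambda_i(\alpha a)=\alpha\lambda_i(a)$ for $\alpha\ge 0$, and unitary invariance follows because the eigenvalue list $\lambda(a)$ is unchanged under $a\mapsto uau^*$ for unitary $u\in M_n(\C)$, while the right-hand side depends on $a$ only through $\lambda(a)$.

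There is no real obstacle here; the argument is a routine transcription of the scalar case. The one point deserving a word of care is that the coefficients $\mu(A_i)-\mu(A_{i-1})$ need not commute with one another or with anything, but this is irrelevant: we only ever compare sums of the form $\sum_i c_i T_i$ with $c_i\ge 0$ scalars and $T_i\ge 0$ fixed operators, and $c_i\le c_i'$ for all $i$ forces $\sum_i c_i T_i\le \sum_i c_i' T_i$ in $B(H)^+$ since $(c_i'-c_i)T_i\ge 0$. So the mild difference from the preceding proposition — that $\mu$ takes values in a noncommutative $B(H)^+$ — causes no difficulty, and the proof closes exactly as before.
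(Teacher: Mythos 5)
Your proposal is correct and matches the paper's intent: the paper's proof is literally ``use the similar argument as above,'' i.e.\ the scalar-case argument (min-max principle for $\lambda_i(a)\le\lambda_i(b)$ plus the Abel-summation rewriting with nonnegative coefficients $\mu(A_i)-\mu(A_{i-1})$), which is exactly what you carry out. Your added remark that noncommutativity of the operator coefficients is harmless, since one only compares $\sum_i c_iT_i$ with $\sum_i c_i'T_i$ for scalars $c_i\le c_i'$ and fixed $T_i\ge 0$, is the right observation and closes the argument.
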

\begin{proof}　Use the similar argument as above.
\end{proof}　

Honda and Okazaki \cite{H-O} proposed the inclusion-exclusion integral with respect to a 
monotone measure, which is a 
generalization of the Lebesgue integral and the the Choquet integral. 
We can also consider a matrix version of the inclusion-exclusion integral. 
\begin{proposition}  Let $H$ be a Hilbert space , 
$\mu:  {\mathcal B} \rightarrow B(H)^+$ be a 
positive operator-valued  monotone measure on a finite set 
$\Omega  = \{1,2, \dots, n\}$  with ${\mathcal B} = P(\Omega)$. 
Fix a positive number $K$. Let $(\Omega, P(\Omega),\mu, I,K)$ be an 
interactive monotone measure space such that the interaction operator 
$I$ is positive and monotone in the sense of \cite{H-O}. 
Define 
$$
(I-\varphi)_{\mu} : \{a \in (M_n({\mathbb C}))^+ \ | 
\ \sigma (a) \subset [0,K] \}  \rightarrow  B(H)^+
$$ 
as follows:  For $a \in (M_n({\mathbb C}))^+$ with the spectrum 
$\sigma (a) \subset [0,K]$ , let 
$\lambda(a) = (\lambda_1(a),\lambda_2(a),\dots, \lambda_n(a))$ be the list of the 
eigenvalues of $a$ in decreasing order with counting multiplicities.  Let
$$
(I-\varphi)_{\mu}(a) = \sum_{A \in P(\Omega) } 
(\sum_{B\supset A} (-1)^{|B\setminus A|}I(\lambda(a)|B))\mu(A). 
$$
Then $(I-\varphi)_{\mu} $ is a a unitary invariant non-linear 
monotone positive map. 
\end{proposition}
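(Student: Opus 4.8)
The plan is to reduce the whole statement to the scalar inclusion--exclusion integral of Honda--Okazaki by testing against vectors. Fix $\xi \in H$ and set $\mu_\xi(A) := \langle \mu(A)\xi,\xi\rangle$ for $A \in P(\Omega)$. Since $\mu$ is a positive operator-valued monotone measure, $\mu_\xi$ is an ordinary (scalar) monotone measure on $(\Omega, P(\Omega))$: indeed $\mu_\xi(\emptyset) = 0$, and $A \subset B$ forces $\mu(A) \le \mu(B)$, hence $\mu_\xi(A) \le \mu_\xi(B)$. For $a \in M_n(\mathbb{C})^+$ with $\sigma(a) \subset [0,K]$, regard the decreasing eigenvalue list $\lambda(a)$ as a function $\Omega \to [0,K]$ via $i \mapsto \lambda_i(a)$. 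Then directly from the definition,
$$
\langle (I-\varphi)_\mu(a)\xi,\xi\rangle
= \sum_{A \in P(\Omega)}\Big(\sum_{B \supset A}(-1)^{|B\setminus A|}I(\lambda(a)|B)\Big)\mu_\xi(A),
$$
which is exactly the scalar inclusion--exclusion integral $(I)\!\int \lambda(a)\, d\mu_\xi$ of Honda--Okazaki \cite{H-O} with respect to the interactive monotone measure space $(\Omega, P(\Omega), \mu_\xi, I, K)$. Note that the $\mathbb{C}$-coefficients $\sum_{B \supset A}(-1)^{|B\setminus A|}I(\lambda(a)|B)$ are real, so $(I-\varphi)_\mu(a)$ is self-adjoint and it suffices to control $\langle (I-\varphi)_\mu(a)\xi,\xi\rangle$ for all $\xi$.

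Positivity then follows from the fact that the inclusion--exclusion integral of a nonnegative function with respect to a monotone measure and a \emph{positive} interaction operator is nonnegative \cite{H-O}: applying this for every $\xi \in H$ gives $(I-\varphi)_\mu(a) \ge 0$, so $(I-\varphi)_\mu$ indeed takes values in $B(H)^+$. For monotonicity, suppose $a, b \in M_n(\mathbb{C})^+$ with $\sigma(a), \sigma(b) \subset [0,K]$ and $a \le b$. By the mini-max principle for eigenvalues, $\lambda_i(a) \le \lambda_i(b)$ for $i = 1, \dots, n$, i.e.\ $\lambda(a) \le \lambda(b)$ pointwise on $\Omega$. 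Since the interaction operator $I$ is \emph{monotone} and $\mu_\xi$ is a monotone measure, the inclusion--exclusion integral is monotone in its integrand \cite{H-O}, so $\langle (I-\varphi)_\mu(a)\xi,\xi\rangle \le \langle (I-\varphi)_\mu(b)\xi,\xi\rangle$ for all $\xi \in H$; hence $(I-\varphi)_\mu(a) \le (I-\varphi)_\mu(b)$. Unitary invariance is immediate: for a unitary $u \in M_n(\mathbb{C})$ one has $\lambda(u a u^*) = \lambda(a)$, and the formula for $(I-\varphi)_\mu$ depends on $a$ only through $\lambda(a)$.

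The one point that needs genuine care, rather than mere citation, is that the hypotheses supplied here (``$I$ positive and monotone in the sense of \cite{H-O}'', $\mu$ operator-valued monotone) do, after testing against a vector $\xi$, reduce precisely to the scalar hypotheses under which Honda--Okazaki establish nonnegativity and monotonicity of the inclusion--exclusion integral; in particular one must check that $\lambda(a)|B \in [0,K]^{|B|}$, so that $I(\lambda(a)|B)$ is actually defined, which is exactly why the domain is restricted to $\{a \in M_n(\mathbb{C})^+ : \sigma(a) \subset [0,K]\}$. Once this bookkeeping is settled, the proof is a routine transcription of the scalar theory, entirely parallel to the Choquet-integral propositions above.
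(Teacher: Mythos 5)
Your proposal is correct, and its core is the same as the paper's: the mini-max principle gives $\lambda_i(a)\le\lambda_i(b)$ for $a\le b$, and the rest is delegated to the positivity/monotonicity properties of the interaction operator $I$ from Honda--Okazaki. The difference is in how the operator-valued measure is handled. The paper argues directly at the operator level: monotonicity of $I$ is taken to give the coefficientwise inequality $\sum_{B\supset A}(-1)^{|B\setminus A|}I(\lambda(a)|B)\le \sum_{B\supset A}(-1)^{|B\setminus A|}I(\lambda(b)|B)$ for each $A$, and these real inequalities are then summed against the positive operators $\mu(A)$. You instead compress by vector states, observing that $\langle(I-\varphi)_\mu(a)\xi,\xi\rangle$ is exactly the scalar inclusion--exclusion integral of $\lambda(a)$ with respect to the scalar monotone measure $\mu_\xi$, and then quote Honda--Okazaki's scalar positivity and monotonicity theorems. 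Your scalarization is a clean reduction that lets you cite the scalar theory verbatim (and is no weaker, since positivity and order of self-adjoint operators are detected by vector states), whereas the paper's coefficientwise argument stays at the operator level and in effect uses a slightly stronger, per-set form of the monotonicity of $I$; both are acceptable at the paper's level of rigor. You also spell out positivity of the values and unitary invariance (via $\lambda(uau^*)=\lambda(a)$), which the paper's proof leaves implicit, and you rightly flag that the restriction $\sigma(a)\subset[0,K]$ is what makes $I(\lambda(a)|B)$ well defined.
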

\begin{proof}　For $a, b  \in (M_n({\mathbb C}))^+$ 
with the spectra $\sigma (a) \subset [0,K]$ and 
$\sigma (b) \subset [0,K]$, suppose  that $0 \leq a \leq b$. 
By the mini-max  principle for eigenvalues, we have that 
$\lambda_i(a) \leq \lambda_i(b)$ for $i = 1,2,\dots,n$.  
Since the interaction operator $I$ is monotone, 
$$
\sum_{B\supset A} (-1)^{|B\setminus A|}I(\lambda(a)|B)
\leq \sum_{B\supset A} (-1)^{|B\setminus A|}I(\lambda(b)|B).
$$
Therefore $(I-\varphi)_{\mu}(a)  \leq  (I-\varphi)_{\mu}(b)$. 
\end{proof}

Next we recall  the Sugeno integral with respect to a monotone measure on a finite set 
$\Omega  = \{1,2, \dots, n\}$.  
\begin{definition} \rm 
The  discrete Sugeno integral  of $f = (x_1, x_2,\dots, x_n) \in [0,\infty)^n$ 
with respect to a monotone measure $\mu$ on a finite set 
$\Omega  = \{1,2, \dots, n\}$ is defined as follows:  
$$
(S)\int f d\mu = \vee_{i=1}^{n} (x_{\sigma(i) } \wedge \mu(A_i) ) , 
$$
where $\sigma$ is a permutaion on $\Omega$ such that  
$x_{\sigma(1)} \geq x_{\sigma(2)} \geq  \dots \geq x_{\sigma(n)}$ 
, $A_i = \{\sigma(1),\sigma(2),\dots,\sigma(i)\}$ and 
$\vee =\max$ , $\wedge = \min$. 
Here we should note that 
$$
 f = \vee_{i=1}^{n} (x_{\sigma(i) } \chi_{A_i}) 
$$
\end{definition}
Let $A = {\mathbb C}^n$ and define 
$(S-\varphi)_{\mu} :( {\mathbb C}^n)^+ \rightarrow {\mathbb C}^+$ 
by the  Sugeno integral $(S-\varphi)_{\mu}(f) = (S)\int f d\mu$.  Then 
$(S-\varphi)_{\mu}$ is a non-linear monotone positive map such that 
$(S-\varphi)_{\mu}(\alpha f) = \alpha(S- \varphi)_{\mu}(f)$ for a positive 
scalar $\alpha$. 

We shall consider a matrix version of the discrete Sugeno integral. 
\begin{proposition} Let $\mu:  {\mathcal B} \rightarrow [0, \infty)$ be a finite 
monotone measure on a finite set 
$\Omega  = \{1,2, \dots, n\}$  with ${\mathcal B} = P(\Omega)$. 
Let $A = M_n({\mathbb C})$ and define 
$(S-\varphi)_{\mu} : (M_n({\mathbb C}))^+ \rightarrow  {\mathbb C}^+$ 
as follows:  For $a \in (M_n({\mathbb C}))^+$, let 
$\lambda(a) = (\lambda_1(a),\lambda_2(a),\dots, \lambda_n(a))$ be the list of the 
eigenvalues of $a$ in decreasing order :
$\lambda_1(a), \geq \lambda_2(a) \geq \dots \geq \lambda_n(a)$ with 
counting multiplicities.  Let
$$
(S-\varphi)_{\mu}(a) = \vee_{i=1}^{n}  (\lambda_i(a) \wedge \mu(A_i) )
$$
where $A_i = \{1,2,\dots,i \}$.
Then $(S-\varphi)_{\mu}$ is a unitary invariant  
non-linear monotone positive map such that 
$(S-\varphi)_{\mu}(\alpha a) = \alpha (S-\varphi)_{\mu}(a)$ for a positive 
scalar $\alpha$. 
\end{proposition}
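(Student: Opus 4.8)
The plan is to reduce every assertion to the two facts that $(S-\varphi)_\mu(a)$ depends on $a$ only through the eigenvalue list $\lambda(a)$, and that this list is monotone in $a$ by the mini-max principle. First I would note that, since the $\lambda_i(a)$ are the eigenvalues counted with multiplicity and arranged in decreasing order, the right-hand side $\vee_{i=1}^n(\lambda_i(a)\wedge\mu(A_i))$ is a well-defined real number depending only on the spectrum of $a$; in particular $(S-\varphi)_\mu(u^*au)=(S-\varphi)_\mu(a)$ for every unitary $u\in M_n(\mathbb{C})$, so the map is unitary invariant. Positivity is immediate: for $a\in(M_n(\mathbb{C}))^+$ each $\lambda_i(a)\ge 0$ and each $\mu(A_i)\ge 0$, hence every term $\lambda_i(a)\wedge\mu(A_i)$ lies in $[0,\infty)$ and so does their maximum, so $(S-\varphi)_\mu$ indeed carries $(M_n(\mathbb{C}))^+$ into $\mathbb{C}^+$.

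The substantive step is monotonicity. Given $0\le a\le b$ in $(M_n(\mathbb{C}))^+$, the mini-max (Weyl monotonicity) principle already invoked for the Choquet case yields $\lambda_i(a)\le\lambda_i(b)$ for every $i=1,\dots,n$. Because $\mu$ is fixed, the map $t\mapsto t\wedge\mu(A_i)$ is nondecreasing on $[0,\infty)$, so $\lambda_i(a)\wedge\mu(A_i)\le\lambda_i(b)\wedge\mu(A_i)$ for each $i$; applying $\vee_{i=1}^n$, which is order preserving on $n$-tuples of reals, gives $(S-\varphi)_\mu(a)\le(S-\varphi)_\mu(b)$. Note that no rearrangement of the sets $A_i$ is needed, since here $A_i=\{1,\dots,i\}$ is tied to the decreasing order of the eigenvalues on both sides; this is exactly what makes the matrix Sugeno functional behave like its scalar model.

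For the scaling assertion I would start from $\lambda_i(\alpha a)=\alpha\lambda_i(a)$ for $\alpha\ge 0$, so that $(S-\varphi)_\mu(\alpha a)=\vee_{i=1}^n(\alpha\lambda_i(a)\wedge\mu(A_i))$. This is the step I expect to require the most care, because the discrete Sugeno integral is \emph{not} positively homogeneous in general: the identity $(S-\varphi)_\mu(\alpha a)=\alpha(S-\varphi)_\mu(a)$ is only literally correct when $\mu$ is rescaled simultaneously, i.e. in the form $(S-\varphi)_{\alpha\mu}(\alpha a)=\alpha(S-\varphi)_\mu(a)$, which then follows termwise from $(\alpha s)\wedge(\alpha t)=\alpha(s\wedge t)$ and $\vee_i\alpha c_i=\alpha\vee_i c_i$. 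I would either adopt that reading, or, if unrestricted positive homogeneity is wanted, replace the Sugeno kernel by the Choquet one. Apart from this point, all the claimed properties follow by the inspection above, with monotonicity --- the application of the mini-max principle --- being the only genuinely nontrivial ingredient.
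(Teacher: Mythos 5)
Your treatment of positivity, unitary invariance and monotonicity is essentially the paper's own argument: the paper likewise only writes out the monotonicity step, using $\lambda_i(a)\le\lambda_i(b)$ (mini-max principle) together with the fact that $t\mapsto t\wedge\mu(A_i)$ and $\vee_{i}$ are order preserving, and declares unitary invariance and the remaining claims clear from the definition. Where you diverge is the homogeneity assertion, and there you are right and the paper is not: the paper states that $(S-\varphi)_{\mu}(\alpha a)=\alpha(S-\varphi)_{\mu}(a)$ is clear by the definition, but the Sugeno functional is not positively homogeneous. A one-line counterexample: take $n=1$, $\mu(\{1\})=1$ and $a=\unit$; then $(S-\varphi)_{\mu}(2a)=2\wedge 1=1$ while $\alpha(S-\varphi)_{\mu}(a)=2(1\wedge 1)=2$. (The same defect already appears in the scalar Sugeno example stated just before the proposition.) Your diagnosis is the correct repair: the identity only holds in the simultaneously rescaled form $(S-\varphi)_{\alpha\mu}(\alpha a)=\alpha(S-\varphi)_{\mu}(a)$, via $(\alpha s)\wedge(\alpha t)=\alpha(s\wedge t)$ and $\vee_i\alpha c_i=\alpha\vee_i c_i$, or else the homogeneity claim should be dropped (it is genuinely a Choquet-type property, not a Sugeno one). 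So the discrepancy here is not a gap in your proposal but an error in the statement and in the paper's proof, which your write-up correctly flags; everything you do prove is proved by the same route as the paper.
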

\begin{proof} For $a, b  \in (M_n({\mathbb C}))^+$, suppose  that $0 \leq a \leq b$. 
Since 
$\lambda_i(a) \leq \lambda_i(b)$ for $i = 1,2,\dots,n$, 
\begin{align*}
(S-\varphi)_{\mu}(a) & = \vee_{i=1}^{n} (\lambda_i(a) \wedge \mu(A_i) ) \\
     & \leq  \vee_{i=1}^{n} (\lambda_i(b) \wedge \mu(A_i) )  
  = (S-\varphi)_{\mu} (b). 
\end{align*}
Thus $\varphi_{\mu}$ is monotone. It is clear 
that $\varphi_{\mu} (\alpha a) = \alpha \varphi_{\mu} (a)$ for a positive 
scalar $\alpha$ by the definiton and $\varphi_{\mu}$ is a unitary invariant.
\end{proof}　

\end{document}